\documentclass{amsart}
\usepackage{amsmath, amssymb, amsthm, mathtools, color}
\usepackage[bookmarks=true]{hyperref}
\usepackage{verbatim}

\numberwithin{equation}{section}

\DeclarePairedDelimiter\abs{\lvert}{\rvert}

\newtheorem{theorem}{Theorem}[section]
\newtheorem{lemma}[theorem]{Lemma}
\newtheorem{corollary}[theorem]{Corollary}

\newtheorem{remark}[theorem]{Remark}
\newtheorem{example}[theorem]{Example}
\newtheorem*{introthA}{Theorem \ref{th:index}}
\newtheorem*{introthB}{Theorem \ref{th:Pointwise}}
\newtheorem*{introthC}{Corollary \ref{co:gisop}}

\newcommand{\Span}{\text{Span}}

\newcommand{\Dim}{\text{Dim}}

\newcommand {\la}{\langle}
\newcommand{\ra}{\rangle}
\newcommand{\Div}{{\text{div}}}
\newcommand{\Hess}{\text{Hess}}
\newcommand{\intS} {\int \limits_\Sigma}

\newcommand{\du} {\, d \mu}
\newcommand{\dA}{\, d \mathcal{A}}
\newcommand{\Au}{\mathcal{A}_\mu}
\newcommand{\Vu}{\mathcal{V}_\mu}
\newcommand{\dV}{\, d\mathcal{V}}
\newcommand{\dAu}{\, d\mathcal{A}_\mu}
\newcommand{\dVu}{\, d\mathcal{V}_\mu}
\newcommand{\Af}{\mathcal{A}_f}
\newcommand{\Vf}{\mathcal{V}_f}
\newcommand{\dAf}{\, d\mathcal{A}_f}
\newcommand{\dVf}{\, d\mathcal{V}_f}

\DeclareMathOperator{\Ind}{Index_{1^\perp}}

\begin{document}
\title[Stable Solutions to the Gaussian Isoperimetric Problem]{The Hyperplane is the Only Stable, Smooth Solution to the Isoperimetric Problem in Gaussian Space}
\date{11/27/2014}
\author{ Matthew McGonagle}
\address{University of Washington, Department of Mathematics, Box 354350, Seattle, WA 98195-4350, USA}
\email{mmcgona1@math.washington.edu}

\author{John Ross}
\address{Department of Mathematics, Johns Hopkins University, 3400 North Charles Street, Baltimore, MD 21218-2686, USA}
\email{jross@math.jhu.edu}

\begin{abstract}
We study stable, two-sided, smooth, properly immersed solutions to the Gaussian Isoperimetric Problem. That is, we study hyper-surfaces $\Sigma^n \subset \mathbb R^{n+1}$ that are second order stable critical points of minimizing $\Au(\Sigma) = \int_\Sigma e^{-|x|^2/4} \dA$ for compact variations that preserve weighted volume. Such variations are represented by $u \in C^\infty_0(\Sigma)$ such that $\int_\Sigma e^{-|x|^2/4} u \dA = 0$. We show that such $\Sigma$ satisfy the curvature condition $H = \la x, N \ra/2 + C$ where $C$ is a constant. We also derive the Jacobi operator $L$ for the second variation of such $\Sigma$.

Our first main result is that for non-planar $\Sigma$, bounds on the index of $L$, acting on volume preserving variations, gives us that $\Sigma$ splits off a linear space. A corollary of this result is that hyper-planes are the only stable  smooth, complete, properly immersed solutions to the Gaussian Isoperimetric Problem, and that there are no hypersurfaces of index one. Finally, we show that for the case of $\Sigma^2 \subset \mathbb R^3$, there is a gradient decay estimate for fixed bound $|C| \leq M$ ($C$ is from the curvature condition) and $\Sigma$ obeying an appropriate $\Au$ condition. This shows that for fixed $C$, in the limit as $R \to \infty$, stable $(\Sigma, \partial\Sigma) \subset (B_{2R}(0), \partial B_{2R}(0))$ with good volume growth bounds approach hyper-planes.
\end{abstract}
\maketitle

\setcounter{section}{0}
%
%
\section*{Introduction}

We will have the need to discuss both volume and area. So, let $\mathcal{A}$ be the $n$-dimensional Hausdorff measure on $\mathbb R^{n+1}$, and let $\mathcal{V}$ be the $(n+1)$-dimensional Hausdorff measure also defined on $\mathbb R^{n+1}$. We consider a Gaussian weighted area measure and a Gaussian weighted volume measure both defined on $\mathbb R^{n+1}$ by $\dAu = e^{-|x|^2/4} \dA$ and $\dVu = e^{-|x|^2/4} \dV$.

In this paper, we always consider two-sided, smooth, properly immersed hyper-surfaces $\Sigma\subset\mathbb R^{n+1}$. $\Sigma$ does not need to be connected, but an orientation must be chosen on each of its components. These orientations provide a globally defined normal vector $N(x)$. We also consider two cases: the case that $\Sigma$ is complete and the case that $\Sigma \subset B_{2R}(0)$ and $\partial\Sigma\subset\partial B_{2R}(0)$. Here, $B_{2R}(0) = \{x\in\mathbb R^{n+1} : |x|\leq 2R\}$ is the euclidean ball of radius $2R$ centered at $x=0$. In the case that $\Sigma$ is complete, we assume that $\Au(\Sigma) < \infty$.

We are interested in hyper-surfaces $\Sigma$ which are stable critical points for the variational problem of minizing $\Au(\Sigma)$ for variations that ``preserve $\Vu$." In Section \ref{sc:firstvar} we will discuss exactly what it means for a variation $F(t,x) : (-\epsilon, \epsilon)\times\Sigma \to \mathbb R^{n+1}$ of an immersed hyper-surface $\Sigma$ to ``preserve $\Vu$". It turns out that these variations are represented by functions $u: \Sigma \to \mathbb R$ defined by $u(x) = \la \partial_t F(0, x), N(x)\ra$ such that $\int_\Sigma u \dAu = 0.$

So, we are specifically interested in hyper-surfaces $\Sigma$ such that the first variation $\delta \Au(u) = 0$ for all $\{u\in C^\infty_0(\Sigma): \int_\Sigma u \dAu = 0 \}$ and such that the second variation $\delta^2 \Au(u) \geq 0$ for all $\{u \in C^\infty_0(\Sigma): \int_\Sigma u \dAu = 0\}$. We will show in Section \ref{sc:firstvar} that the former condition on the first variation is equivalent to the mean curvature condition
\begin{equation}\label{eq:curvcond}
H = \frac {\la x, N(x)\ra}{2} + C,
\end{equation}
where $C$ is a constant. The latter condition on the second variation is equivalent to the stability condition
\begin{equation}\label{eq:minimizing}
Q(u) = -\intS u L u \dAu \geq 0 \text{  for all } u\in C^\infty_0(\Sigma)\text{ such that } \intS u \dAu = 0.
\end{equation}
Here, $Lu$ is defined by
\begin{equation}
Lu = \triangle u - (1/2) \nabla_{x^T} u + |A|^2 u + (1/2)u,
\end{equation}
where $x^T$ is the tangential part of the position vector $x$, $|A|$ is the norm of the second fundamental form of $\Sigma$, and the differential operators $\triangle$ and $\nabla$ are defined using the metric on $\Sigma$ induced by the euclidean metric. Also, $Q(u,v)$ is the quadratic form defined on $C^\infty_0(\Sigma)$ by \begin{equation}
Q(u,v) = - \intS u L v \dAu.
\end{equation}

In this paper, we will use $C$ only to denote the constant in the mean curvature condition \eqref{eq:curvcond}.
One should note that in the case that $C=0$, one has that the $\Sigma$ is a self-shrinker of the mean curvature flow.

In the case that $\Sigma$ is embedded with $\Sigma = \partial \Omega$, we have that the mean curvature condition \eqref{eq:curvcond} and the stability condition \eqref{eq:minimizing} are necessary conditions for $\Sigma$ to be a local minimizer of the Gaussian Isoperimetric Problem. For any region $\Omega$, let $\Omega_r = \{x\in\mathbb R^{n+1}: \text{dist}(x, \Omega) \leq r\}$, where the distance is the Euclidean distance. For the Gaussian Isoperimetric Problem, one is interested in the boundary measure $P_\mu(\Omega)$ defined by
\begin{equation}
P_\mu(\Omega) = \liminf\limits_{r\searrow 0} \frac{\Vu(\Omega_r) - \Vu(\Omega)}{r}.
\end{equation} 
The Gaussian Isoperimetric Problem is concerned with minimizing $P_\mu(\Omega)$ over regions $\Omega$ with fixed Gaussian volume $\Vu(\Omega) = \mathcal{V}_0$. For $\partial\Omega$ smooth, one has that $P_\mu(\Omega) = \Au(\Omega)$. For details, see the survey by Ros \cite{ros}. So we indeed see that conditions \eqref{eq:curvcond} and \eqref{eq:minimizing} are necessary for $\Omega$ with $\Sigma = \partial \Omega$ to be a local minimizer to the Gaussian Isoperimetric Problem. 

It is important to note that the Gaussian Isoperimetric Problem is not the same as the classical Isoperimetric Problem for a metric conformal to the standard metric on $\mathbb R^{n+1}$; this is readily seen by the fact that $\dAu$ and $\dVu$ are defined using the same weights. Therefore, the mean curvature condition \eqref{eq:curvcond} is not the same as $\Sigma$ having constant mean curvature in a metric conformal to $\mathbb R^{n+1}$.

Borell \cite{borell} and Sudakov \& Tsirel'son \cite{sudakovtsirelson} show that the global minimizers $\Omega$ for the Gaussian Isoperimetric Problem are half-spaces. Specifically, they show that for any Borel $\Omega$ such that $\Vu(\Omega) = \mathcal{V}_0$ and any half-space $S$ such that $\Vu(S) =\mathcal{V}_0$, one has that $\Vu(\Omega_r) \geq \Vu(S_r)$.

The main result of our paper includes a similar fact for critical hyper-surfaces satisfying the curvature condition \eqref{eq:curvcond} and the locally minizing condition \eqref{eq:minimizing}. Before we state it, let us define $\Ind Q$ to be the maximal dimension of subspace $W\subset\{u\in C^\infty_0(\Sigma): \int_\Sigma u\dAu =0\}$ for which $Q$ is negative definite on $W$. The notation being suggestive of the fact that $\Ind Q$ is the index of the quadratic form $Q$ over the subspace of $C^\infty_0(\Sigma)$ orthogonal to the constant functions. Then, we have the following:

\begin{introthC}
The hyper-planes are the only two-sided, smooth, complete, properly immersed hypersurfaces $\Sigma\subset\mathbb R^{n+1}$ such that $\Au(\Sigma)<\infty$, $\Sigma$ satisfies $H = (1/2)\la x, N\ra + C$ for some constant $C$, and $\Sigma$ satisfies the locally stable condition \eqref{eq:minimizing}. 

Furthermore, there are no two-sided, smooth, complete, properly immersed $\Sigma$ such that $\Au(\Sigma)<\infty$, $\Sigma$ satisfies $H = (1/2)\la x, N \ra + C$, and $\Ind Q = 1$.  
\end{introthC}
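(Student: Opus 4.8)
The plan is to deduce both assertions from Theorem \ref{th:index} by a dimension count, after two preliminary observations. First, a single hyperplane really is stable: there $A\equiv 0$ and $Q(u)=\int_\Sigma|\nabla u|^2\dAu-\tfrac12\int_\Sigma u^2\dAu$, which is $\ge 0$ whenever $\int_\Sigma u\dAu=0$ by the sharp Gaussian Poincar\'e inequality (the first nonzero eigenvalue of the Ornstein--Uhlenbeck operator with the weight $\eterm$ is $\tfrac12$). Second, a disjoint union of two or more parallel hyperplanes is never stable: taking $u$ to be a different constant on different components with $\int_\Sigma u\dAu=0$ gives $\nabla u\equiv0$ and $Q(u)=-\tfrac12\int_\Sigma u^2\dAu<0$. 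Granting these, both statements of the Corollary reduce to showing that every \emph{non-planar} $\Sigma$ in the given class satisfies $\Ind Q\ge 2$.

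So suppose $\Sigma$ is non-planar with $\Ind Q\le 1$. Theorem \ref{th:index} then forces $\Sigma$ to split off a Euclidean factor whose dimension is (essentially) $n$ minus the index. Hence $\Ind Q=0$ gives $\Sigma=\{\text{points}\}\times\mathbb R^{n}$, a union of parallel hyperplanes, which by the two observations above must be a single hyperplane --- contradicting non-planarity. If instead $\Ind Q=1$ we obtain $\Sigma=\gamma\times\mathbb R^{n-1}$ with $\gamma\subset\mathbb R^{2}$ a non-planar (connected, when $\Sigma$ is) curve which again satisfies $H=\tfrac12\la x,N\ra+C$ with the \emph{same} $C$ and has $\Au(\gamma)<\infty$; the competing case $\Sigma=\{\text{point}\}\times\mathbb R^{n}$ is a single hyperplane, excluded since it has $\Ind Q=0\ne 1$.

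Now I build a large $Q$-negative subspace inside $\{u\in C^\infty_0(\Sigma):\int_\Sigma u\dAu=0\}$. Let $z_1,\dots,z_{n-1}$ be the coordinates on the $\mathbb R^{n-1}$-factor and $g_i=\la e_i,N\ra$, $i=1,2$, the normal components coming from the $\mathbb R^2$-factor. From the product splitting $L_\Sigma\big(v(y)w(z)\big)=(L_\gamma v)\,w+v\big(\triangle_z w-\tfrac12\la z,\nabla_z w\ra\big)$ and $|A_\Sigma|^2=\kappa^2$ (the curvature of $\gamma$), one gets $L_\Sigma z_j=\kappa^2 z_j$ and $L_\Sigma g_i=L_\gamma g_i=\tfrac12 g_i$, the latter being the curve version of $L\la v,N\ra=\tfrac12\la v,N\ra$ (valid for any hypersurface obeying the curvature condition). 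Therefore $Q(z_j)=-\int_\Sigma\kappa^2 z_j^2\dAu<0$ (as $\kappa\not\equiv 0$ since $\gamma$ is not a line), $Q(g_i)=-\tfrac12\int_\Sigma g_i^2\dAu<0$, the $z_j$ are mutually $Q$-orthogonal, and $Q(g_i,z_j)=0$ because the fibre integral of $\kappa^2 g_i z_j$ vanishes by odd symmetry; so $Q$ is negative definite on $W=\Span\{g_1,g_2,z_1,\dots,z_{n-1}\}$, which has dimension $n+1$ ($g_1,g_2$ being independent precisely because $\gamma$ is not a line). Since $\int_\Sigma z_j\dAu=0$, the functional $u\mapsto\int_\Sigma u\dAu$ has at most one-dimensional cokernel on $W$, so $W\cap\{\int_\Sigma u\dAu=0\}$ has dimension $\ge n$, giving $\Ind Q\ge n\ge 2$ when $n\ge 2$ --- a contradiction.

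The remaining, and main, obstacle is $n=1$: a non-planar curve $\gamma\subset\mathbb R^2$ with $\kappa=\tfrac12\la x,\nu\ra+C$ and $\Au(\gamma)<\infty$, for which Theorem \ref{th:index} yields no splitting. Here $\Span\{g_1,g_2\}$ is still a two-dimensional $Q_\gamma$-negative subspace, but to place it inside $\{\int_\gamma u\dAu=0\}$ one needs $\int_\gamma\la e_i,\nu\ra\dAu=0$, i.e. $\int_\gamma\nu\dAu=0$. I expect to obtain this from the structure of such curves: the ODE $\kappa=\tfrac12\la x,\nu\ra+C$, together with completeness and $\Au(\gamma)<\infty$, should force $\gamma$ to be closed and invariant under a nontrivial cyclic rotation about the origin (a classification of Abresch--Langer type), and that symmetry makes the vector $\int_\gamma\nu\dAu$ fixed by a nontrivial rotation, hence zero, whence $\Ind Q_\gamma\ge 2$. (One could alternatively try to show $Q_\gamma$ is negative definite on $\Span\{1,g_1,g_2\}$ and use $\int_\gamma 1\,\dAu\ne 0$, but the cross terms $\int_\gamma\la e_i,\nu\ra(1+\kappa^2)\dAu$ do not obviously cancel, so the symmetry route looks cleaner.) A recurring technical point is that $g_i$ and $z_j$ are only bounded, respectively of polynomial growth, and not compactly supported, so every use of them as test functions must be justified by the standard cutoff/density argument in the weighted space --- which is exactly where properness and $\Au(\Sigma)<\infty$ are used.
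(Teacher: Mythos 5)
Your reduction contains a genuine gap, and it stems from a misreading of Theorem \ref{th:index}. That theorem gives a splitting $\Sigma=\Sigma_0\times\mathbb R^{i}$ with $n+1-\Ind Q\leq i\leq n$, not $i\approx n-\Ind Q$ as you use. So for non-planar $\Sigma$, $\Ind Q=0$ would require $i\geq n+1$, contradicting $i\leq n$ outright; and $\Ind Q=1$ forces $i=n$, i.e.\ $\Sigma=\Sigma_0\times\mathbb R^{n}$ with $\Sigma_0$ zero-dimensional, a union of parallel hyperplanes with $A\equiv 0$ --- contradicting non-planarity. (The final sentence of Theorem \ref{th:index} already records exactly this conclusion; the mechanism is that $Q$ is negative definite on a cutoff of $\Span\{1,\la v,N\ra\}_{v\in\mathbb R^{n+1}}$, whose intersection with $1^\perp$ has dimension at least $\Dim\Span\{\la v,N\ra\}_v$, whence $\Ind Q\geq n+1-\Dim\{v:\la v,N\ra\equiv 0\}$.) Consequently the case $\Sigma=\gamma\times\mathbb R^{n-1}$ never arises under the hypothesis $\Ind Q\leq 1$, and the entire second half of your argument is both unnecessary and, as written, incomplete: the $n=1$ endpoint rests on an unproved Abresch--Langer-type classification and an unproved vanishing of $\int_\gamma\nu\dAu$ (``I expect\dots'', ``should force\dots''), so the proof does not close. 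Citing Theorem \ref{th:index} with the correct inequality makes the corollary immediate.

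Your two preliminary observations are correct and worth keeping. The stability of a single hyperplane via the sharp Gaussian Poincar\'e inequality is equivalent to the harmonic-oscillator argument of Section \ref{sc:planes}. The instability of a disjoint union of two or more hyperplanes, via locally constant test functions with $\int_\Sigma u\dAu=0$, is a point the paper leaves implicit but which is genuinely needed to pass from ``$A\equiv 0$'' to ``a single hyperplane'' when $\Sigma$ is not assumed connected; making it explicit is a real improvement.
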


This corollary stands in contrast with the minimal case. The self-shrinkers of mean curvature flow ($C = 0$) are exactly those hyper-surfaces that satisfy $\delta \Au(u) = 0$ for all $u\in C^\infty_0(\Sigma)$. Colding-Minicozzi \cite{cm2012} show that there are no self-shrinkers that are stable for all variations $u\in C^\infty_0(\Sigma)$. This non-existence of stability has been generalized to other measures by Impera-Rimoldi \cite{ImperaRimoldi} and Cheng-Mejia-Zhou \cite{ChengMejiaZhou}.

This corollary is the direct result of a more general statement related to the fact that bounds on $\Ind Q$ force any non-planar $\Sigma$ to split off a linear space. An explicit statement is contained in the following.
\begin{introthA}
Consider any two-sided, smooth, properly immersed, non-planar hyper-surface $\Sigma\subset\mathbb R^{n+1}$ such that $\Au(\Sigma)<\infty$, $\Sigma$ satisfies the mean curvature condition $H = (1/2)\la x, N \ra + C$, and $\Ind Q \leq n$. Then there exists an $i$ such that $ n+1-\Ind Q \leq i \leq n$, and we have that
\begin{equation}
\Sigma = \Sigma_0 \times \mathbb R^{i}.
\end{equation}
Furthermore, for such non-planar $\Sigma$ it is impossible that $\Ind Q = 0$ or \\$\Ind Q=1$.
\end{introthA}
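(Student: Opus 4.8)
The plan is to produce, inside $C^\infty_0(\Sigma)$, a linear subspace of dimension $n+2-k$ on which $Q$ is negative definite, where $\mathbb{R}^k$ is the largest Euclidean factor that $\Sigma$ splits off; since the single weighted-volume constraint $\intS u\dAu=0$ lowers dimension by at most one, this forces $\Ind Q\ge n+1-k$, and both assertions of the theorem follow. The building blocks are the translation functions: for a constant vector $v\in\mathbb{R}^{n+1}$ write $\phi_v=\la v,N\ra$; a short computation from the curvature condition \eqref{eq:curvcond} and the Codazzi equation (the position-dependent terms cancel) gives $L\phi_v=\tfrac12\phi_v$ for every such $v$, with $C$ dropping out --- the general-$C$ analogue of the Colding-Minicozzi identities for self-shrinkers. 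Let $V=\{\phi_v:v\in\mathbb{R}^{n+1}\}\subset C^\infty(\Sigma)$; the kernel of $v\mapsto\phi_v$ is $K=\{v:\la v,N\ra\equiv 0\}$, so $m:=\dim V=n+1-\dim K$, and for $v\in K$ the parallel field $v$ is everywhere tangent to $\Sigma$, hence $\Sigma=\Sigma_0\times\mathbb{R}^k$ with $k=\dim K=n+1-m$. Since $L(1)=|A|^2+\tfrac12$, and in fact $m=1$ would force $N$ to take only two antipodal values (so $\Sigma$ would be a union of parallel hyperplanes), for non-planar $\Sigma$ (i.e.\ $A\not\equiv 0$) we have $m\ge 2$, equivalently $k\le n-1$.

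On $V$ the form $Q$ is negative definite, $Q(\phi_v)=-\tfrac12\intS\phi_v^2\dAu$, but by itself this only gives $\Ind Q\ge m-1$ once the constraint is imposed, which is one dimension short; the issue is to recover it. I would do so by passing to $\widehat{Q}(u,w):=Q(u,w)+\tfrac12\intS uw\dAu=\intS\big(\la\nabla u,\nabla w\ra-|A|^2uw\big)\dAu$. For $u\in C^\infty_0(\Sigma)$ and any $w$, integration by parts gives $\widehat{Q}(u,w)=-\intS u\,(L-\tfrac12)w\,\dAu$; taking $w=\phi_v$ and using $(L-\tfrac12)\phi_v=0$ shows $\widehat{Q}(u,\phi_v)=0$ for all such $u$ and all $v$, so $V$ sits in the radical of $\widehat{Q}$. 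Hence a \emph{single} function $u_0$ with $\widehat{Q}(u_0)<0$ makes $Q$ negative definite on the $(m+1)$-dimensional space $V\oplus\mathbb{R}u_0$: for $w\in V$ and $t\in\mathbb{R}$,
\[
Q(w+tu_0)=\widehat{Q}(w+tu_0)-\tfrac12\intS (w+tu_0)^2\dAu=t^2\widehat{Q}(u_0)-\tfrac12\intS (w+tu_0)^2\dAu<0
\]
unless $w+tu_0=0$, i.e.\ unless $w=0$ and $t=0$ (note $u_0\notin V$ since $\widehat{Q}$ vanishes on $V$).

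To construct $u_0$ I would take a cutoff $\eta_R$ with $\eta_R\equiv 1$ on $B_R(0)$, $\eta_R\equiv 0$ off $B_{2R}(0)$, and $|\nabla\eta_R|\le c/R$; then $\widehat{Q}(\eta_R)\le \tfrac{c^2}{R^2}\Au(\Sigma)-\int_{B_R(0)\cap\Sigma}|A|^2\dAu$, which is negative for $R$ large because $A\not\equiv 0$ while $\Au(\Sigma)<\infty$ (the case $\intS|A|^2\dAu=\infty$ being even easier). Finally, I would replace the non-compactly-supported generators $\phi_{v_1},\dots,\phi_{v_m}$ of a basis of $V\oplus\mathbb{R}u_0$ by $\eta_{R'}\phi_{v_j}\in C^\infty_0(\Sigma)$ and let $R'\to\infty$: these converge to $\phi_{v_j}$ in the weighted $W^{1,2}$ norm, and the Gram matrix of $Q$ on $\{\eta_{R'}\phi_{v_1},\dots,\eta_{R'}\phi_{v_m},u_0\}$ converges to the negative-definite Gram matrix above, so for $R'$ large there is an $(m+1)$-dimensional $W\subset C^\infty_0(\Sigma)$ with $Q|_W$ negative definite. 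Intersecting $W$ with $\{u\in C^\infty_0(\Sigma):\intS u\dAu=0\}$ leaves a subspace of dimension at least $m$ on which $Q<0$, so $\Ind Q\ge m=n+1-k$. As $k\le n-1$, this already forces $\Ind Q\ge 2$, which gives the final sentence; and if in addition $\Ind Q\le n$ then $k\ge n+1-\Ind Q\ge 1$, so $\Sigma=\Sigma_0\times\mathbb{R}^k$ with $i:=k$ satisfying $n+1-\Ind Q\le i\le n$.

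The step I expect to be the real work is this passage to compact support and the convergence of the Gram matrices: it requires $\phi_v\in W^{1,2}$ with respect to $\dAu$, i.e.\ $\intS|A|^2\dAu<\infty$, which must be obtained beforehand from proper immersion and \eqref{eq:curvcond}, and it requires the boundary terms at infinity in the integrations by parts of the previous two paragraphs to vanish --- the standard ``good radii'' argument fueled by $\intS|A|\,\dAu<\infty$. These are technical but routine; the genuinely new ingredients are the identity $L\la v,N\ra=\tfrac12\la v,N\ra$ for all constant $v$, and the observation that the zeroth-order term $\tfrac12 u$ in $L$ places $V$ in the radical of $\widehat{Q}$, which is exactly what lets one extra test function recover the dimension that the weighted-volume constraint would otherwise cost.
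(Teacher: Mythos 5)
Your proposal follows essentially the same route as the paper: the test space is the translations $\la v,N\ra$ together with (an approximation of) the constant function, cut off to have compact support; negative definiteness of $Q$ there plus the loss of at most one dimension from the constraint $\intS u\dAu=0$ gives $\Ind Q\ge\dim\Span\{\la v,N\ra\}$, and the kernel of $v\mapsto\la v,N\ra$ produces the splitting. Your $u_0=\eta_R$ is exactly the paper's $\phi\cdot 1$, and your observation that $V$ lies in the radical of $\widehat{Q}(u,w)=\intS(\la\nabla u,\nabla w\ra-|A|^2uw)\dAu$ against compactly supported functions is precisely the paper's identity \eqref{eq:orthoA} (equivalently \eqref{eq:812}), restated; the paper uses it with Cauchy--Schwarz to absorb the cross term $\intS\phi^2|A|^2c_0\la v,N\ra\dAu$, you use it to make the cross terms vanish outright. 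The dimension count, the $m\ge 2$ argument for non-planar $\Sigma$, and the deduction of the two impossibility cases all match the paper.

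The one place your write-up would need repair is the limiting step. You propose to justify convergence of the Gram matrix by $W^{1,2}(\dAu)$ convergence of $\eta_{R'}\phi_v\to\phi_v$, which requires $\intS|A|^2\dAu<\infty$, and you assert this is ``routine'' from properness and \eqref{eq:curvcond}. It is not: no such integrability is known to follow from $\Au(\Sigma)<\infty$ in this generality (already for self-shrinkers, Colding--Minicozzi structure their stability argument specifically to avoid assuming $\intS|A|^2\dAu<\infty$), and moreover $Q$ is not continuous in $W^{1,2}$ because of the zeroth-order $|A|^2$ term. The fix --- and what the paper actually does via Lemma \ref{lm:4_1} --- is to never pass to the uncut functions: compute $Q(\eta\phi_{v_i},\eta\phi_{v_j})=-\tfrac12\intS\eta^2\phi_{v_i}\phi_{v_j}\dAu+\intS|\nabla\eta|^2\phi_{v_i}\phi_{v_j}\dAu$ directly from \eqref{eq:Lphif} and $L\la v,N\ra=\tfrac12\la v,N\ra$, so that every $|A|^2$ term cancels identically and the error is controlled by $R'^{-2}\Au(\Sigma)$ alone; the cross terms with $u_0$ are exact once $\eta_{R'}\equiv 1$ on $\operatorname{supp}u_0$. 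With that substitution (and the compactness argument on the unit sphere of coefficients to get an $R'$ uniform over the test space, as in Lemma \ref{lem:phiV}), your argument closes without any unproved integrability.
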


The main idea of the proof of this theorem uses translations by constant vectors $v\in\mathbb R^{n+1}$ represented by functions $u = \la v, N \ra$, and it also uses uniform movement in the normal direction which is represented by the constant function $u=1$. For any $\Sigma$ satisfying the curvature condition \eqref{eq:curvcond} and any constant vector $v \in \mathbb R^{n+1}$, we have that $L \la v, N\ra = (1/2) \la v, N \ra$. Then, using that $W = \{u = \la v, N \ra : v\in\mathbb R^{n+1}\}$ is a vector space, we can find $v$ such that $u= \la v,N\ra$ satisfies $\int_\Sigma u \dAu  =0.$ In fact, by being careful with our estimates we can consider the space $\Span \{ \la v,N\ra, 1\}_{v\in\mathbb R^{n+1}}$. By applying appropriate cut-off functions and plugging into the stability condition \eqref{eq:minimizing}, we get a lower bound on the dimension of $\{v\in\mathbb R^{n+1}:\la v, N \ra \equiv 0  \}$.

It is interesting to compare our use of translations with the use of homotheties in the classical Isoperimetric Problem. It is a well known result of Schwarz \cite{schwarz} and Steiner \cite{steiner} that the global minimizers of the classical Isoperimetric Problem for $\mathbb R^{n+1}$ are round $n$-spheres. Barbosa-Do Carmo \cite{barbosadocarmo} use functions representing homotheties and uniform movement in the normal direction to show that the round $n$-spheres are also the only compact smooth hyper-surfaces which are local minimizers for the Isoperimetric Problem in $\mathbb R^{n+1}$. Homotheties also play an important role in other work such as that of Morgan-Ritor{\'e} \cite{morganritore}, Palmer \cite{palmer}, and Wente \cite{wente}.

For the case of $\Sigma \subset B_{2R}(0)$ and $\partial \Sigma \subset \partial B_{2R}(0)$, we are able to show a type of estimate for $\int_{\Sigma\cap B_R(0)} |A|^2 \dAu$. For the case of $n=2$, we use this integral estimate to obtain pointwise decay estimates for $|A|$ depending only on $R,$ $M$ such that $|C| \leq M$, and appropriate $\Au$ conditions. Specifically, we have
\begin{introthB}[Pointwise for $n=2$]
Let $M > 0$ be given and $R> 1$. Also, let $\Sigma \subset B_{2R}(0) \subset \mathbb R^{3}$ with $\partial \Sigma \subset \partial B_{2R}(0)$ be a hyper-surface with $H = \la x, N \ra/2 + C$ and $|C| \leq M$ that satisfies the stability condition \eqref{eq:minimizing}.  

There exists $\epsilon_M > 0$ such that if $\Au(\Sigma \cap B_R) \geq 2B_n R^{-2}\Au(\Sigma\cap(B_{2R}\setminus B_R))$ and $\Au(\Sigma \cap(B_{2R} \setminus B_R)) < (R^2/2B_n)e^{-(\frac{1}{16}+\gamma) R^2} \epsilon_M $ for some $\gamma > 0$, then

\begin{equation}
\sup\limits_{x\in B_{R/4}(0)} |A|^2 \leq 16 R^2e^{-\gamma R^2}.
\end{equation}
\end{introthB}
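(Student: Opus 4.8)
The plan is to combine the integral estimate for $\int_{\Sigma\cap B_R}|A|^2\dAu$ (mentioned in the introduction as one of the results for the case $\Sigma\subset B_{2R}$, $\partial\Sigma\subset\partial B_{2R}$) with a Simons-type differential inequality for $|A|^2$ and a Moser iteration / mean value inequality adapted to the drift Laplacian $L$. First I would establish the integral bound: by plugging suitable logarithmic cut-off functions built from $|x|$ into the stability inequality \eqref{eq:minimizing}, using $L\la v,N\ra=(1/2)\la v,N\ra$ and $Lu = \triangle u - (1/2)\nabla_{x^T}u + |A|^2u + (1/2)u$ together with the curvature condition $H=\la x,N\ra/2+C$ (which controls $\la x,N\ra$ and hence the drift term in the region $B_{2R}$), one gets
\begin{equation}
\int_{\Sigma\cap B_R}|A|^2\dAu \;\leq\; C_n R^{-2}\,\Au\bigl(\Sigma\cap(B_{2R}\setminus B_R)\bigr),
\end{equation}
valid once $|C|\le M$; the two hypotheses in the statement ($\Au(\Sigma\cap B_R)\ge 2B_nR^{-2}\Au(\Sigma\cap(B_{2R}\setminus B_R))$ and the smallness of $\Au(\Sigma\cap(B_{2R}\setminus B_R))$) are exactly what is needed to turn this into a genuinely small quantity and to close the iteration below. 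Stripping the weight $\eterm$ on $B_{R/4}$, where $e^{-|x|^2/4}\in[e^{-R^2/4},1]$ up to harmless constants, converts this into a small bound on the unweighted $L^2$ norm of $|A|$ on an interior ball, with the stated $e^{-(\frac1{16}+\gamma)R^2}$ factor producing the $e^{-\gamma R^2}$ on the right-hand side after the weight is removed (note $R/4$ gives $|x|^2/4 \le R^2/16$).

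Next I would derive the pointwise estimate from the integral estimate via an $\epsilon$-regularity argument. In dimension $n=2$ one has Simons' identity $\triangle|A|^2 \ge -c|A|^4 + \text{(lower order)}$; incorporating the drift and the curvature condition, $|A|^2$ satisfies an inequality of the form $L_0|A|^2 \ge -c_M(1+R^2)|A|^4 - c_M|A|^2$ on $\Sigma\cap B_{2R}$, where $L_0 = \triangle - (1/2)\nabla_{x^T}$ is the drift Laplacian (the lower-order terms are controlled because $|x|\le 2R$ and $|C|\le M$). When $\|A\|_{L^2}$ on a ball is small enough — which is guaranteed by the integral estimate plus the smallness hypothesis — the nonlinear term is subcritical and a De Giorgi–Nash–Moser iteration on $\Sigma\cap B_{R/4}$ yields
\begin{equation}
\sup_{\Sigma\cap B_{R/4}}|A|^2 \;\leq\; c\,R^{2}\int_{\Sigma\cap B_{R/2}}|A|^2\,\dA,
\end{equation}
and feeding in the integral bound gives the claimed $16R^2e^{-\gamma R^2}$ after choosing $\epsilon_M$ small. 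The role of $\epsilon_M$ is precisely to make the $L^2$ norm of $|A|$ fall below the threshold where Moser iteration converges for the given geometry.

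The main obstacle I expect is twofold. First, the $R$-dependence has to be tracked with care throughout: the cut-off functions, the Sobolev/Poincaré constants in the iteration, and the weight-stripping all introduce powers of $R$ and exponentials in $R^2$, and these must conspire to give exactly the exponent $\frac1{16}+\gamma$ in the hypothesis and $e^{-\gamma R^2}$ in the conclusion — getting a clean statement requires choosing the cut-off scales (comparing $B_R$, $B_{R/2}$, $B_{R/4}$) so that no uncontrolled geometry enters. Second, and more seriously, running Moser iteration requires a local Sobolev inequality on $\Sigma$ with constants independent of the (a priori unknown) geometry; here one must exploit that $\Sigma$ lies in a fixed ambient ball and that the smallness hypotheses bound the local area ratios, so that a Michael–Simon type Sobolev inequality applies with controlled constant. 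Once the Sobolev inequality is in hand with $R$-explicit constants, the iteration itself is routine, so I would budget most of the effort for the integral estimate and the Sobolev step.
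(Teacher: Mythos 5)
Your overall architecture --- a stability-based integral estimate for $\int_{\Sigma\cap B_R}|A|^2\dAu$, a Simons-type inequality, and an $\epsilon$-regularity step that converts smallness of the $L^2$ norm of $|A|$ into a pointwise bound after stripping the Gaussian weight on an interior ball --- matches the paper's. But there is a genuine gap in your first step: you cannot simply ``plug cut-off functions into the stability inequality \eqref{eq:minimizing}'', because that inequality is only available for test functions $u$ with $\intS u\dAu=0$, and neither $\phi$ nor $\phi\la v,N\ra$ satisfies this constraint in general. This admissibility problem is the central difficulty of the integral estimate, and the paper resolves it by introducing the weighted average normal $N_\phi=\intS\phi N\dAu\,/\intS\phi\dAu$ and testing with $u=\phi\la v,N-N_\phi\ra$, which has weighted mean zero by construction (Lemma \ref{lm:modstability}). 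The price is extra terms involving $N_\phi$ and $\intS\phi^2|A|^2\la v,N\ra\dAu$; the latter is controlled by the identity \eqref{eq:orthoA} (after cutting off, the translation functions are only approximately orthogonal to $|A|^2$), and the former by viewing the resulting inequality as a quadratic in $|N_\phi|$ and minimizing. It is precisely in the ensuing algebra --- solving $ab/(a+b)\le c$ for $b=\intS\phi^2|A|^2\dAu$ in Theorem \ref{th:IntEst} --- that the hypothesis $\Au(\Sigma\cap B_R)\ge 2B_nR^{-2}\Au(\Sigma\cap(B_{2R}\setminus B_R))$ is used: it is needed to \emph{prove} the integral estimate, not merely to make the resulting quantity small afterward. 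Without some version of this device your step 1 does not go through. (Also, a small arithmetic slip: on $B_{R/4}$ one has $|x|^2/4\le R^2/64$, not $R^2/16$; the exponent $1/16$ in the hypothesis is what is needed to strip the weight over the region where the integral estimate lives, which reaches out to radius comparable to $R/2$, so your version errs on the safe side but for the wrong reason.)

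Your second step takes a genuinely different route from the paper. You propose De Giorgi--Nash--Moser iteration, which requires a Sobolev inequality on $\Sigma$; this is workable in principle (the Michael--Simon constant is universal for immersed surfaces, with an $|H|f$ term controlled by $|H|\le R+M$ on $B_{2R}$, so your worry about area ratios is overstated), but tracking the $R$-dependence through the iteration is delicate. The paper instead uses a Choi--Schoen point-picking argument (Theorem \ref{th:choischoen}): maximize $F(y)=(r_0-d(y,x_0))^2|A(y)|^2$ on balls of radius $r_0<1/R$, use the Simons inequality of Lemma \ref{lm:simon} to get $\triangle|A|^2\ge-\lambda(M)\sigma^{-2}|A|^2$ near the maximum point (the restriction $r_0<1/R$ tames the $|x|^2|A|^2$ term), and then apply the elementary mean value inequality for surfaces with bounded mean curvature (Lemma \ref{lm:mean}), which rests only on the monotonicity identity $\triangle|x|^2=2n-2\la x,N\ra H$ and needs no Sobolev inequality. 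This keeps all constants depending only on $M$ and yields the explicit bound $\delta/\sigma^2$ with $\delta=e^{-\gamma R^2}$ and $\sigma$ comparable to $1/R$, which is where the factor $16R^2$ in the conclusion comes from. Either $\epsilon$-regularity mechanism can be made to work; the paper's is shorter and produces cleaner constants, so if you repair the admissibility gap in step 1 you may as well adopt it.
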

Note that if $\Sigma$ is complete and $\Au(\Sigma)<\infty$, then for large $R$ we may apply this theorem and then take $R\to\infty$. Doing so, we again see that the only complete, two-sided, smooth, properly immersed $\Sigma$ that satisfy the mean curvature condition \eqref{eq:curvcond} and the stability condition \eqref{eq:minimizing} are the hyper-planes.

\subsection{Conventions and Notation}

We use $\Sigma$ to represent hyper-surfaces that are smooth, two-sided, and properly immersed. We will use $x$ to denote the position vector in $\mathbb R^{n+1}$.

We will denote the second fundamental form of $\Sigma$ by $A(X,Y) = \la \nabla_X N, Y \ra$. We then use $H = \Div N$ for the mean curvature of $\Sigma$.  With this convention, the cylinder $S^{k} \times \mathbb{R}^{n-k} \subset \mathbb{R}^{n+1}$ with radius $R$ has mean curvature $H = k/R$.

$C$ will always be used to denote the constant in the mean curvature condition \eqref{eq:curvcond}. When we need to make use of other constants, we will use different letters.

For volume forms and area forms we will use the following notation:
\begin{itemize}
\item $\mathcal{V}$ is the $(n+1)$-Hausdorff measure on $\mathbb R^{n+1}$.
\item $\mathcal{A}$ is the $n$-Hausdorff measure on $\Sigma\subset\mathbb R^{n+1}$.
\item $\dVu = e^{-|x|^2/4} \dV$.
\item $\dAu = e^{-|x|^2/4} \dA$.
\item $\dVf = e^f \dV$ for a function $f: \mathbb R^{n+1} \to \mathbb R$.
\item $\dAf = e^f \dA$ for a function $f: \mathbb R^{n+1} \to \mathbb R$.
\end{itemize}

We will use the second order operators defined by
\begin{itemize}
\item $L u = \triangle u - (1/2)\nabla_{x^T} u + |A|^2 u + (1/2) u.$
\item $\mathcal L u = \triangle u - (1/2)\nabla_{x^T} u.$
\end{itemize}
Note that $Lu = \mathcal L u + |A|^2 + (1/2) u$. Here $x^T$ is the tangential part of the position vector.

Let $Q(u,v) = -\int_\Sigma u L v \dAu$ for $u,v \in C^\infty_0(\Sigma)$.

$\Ind Q$ denotes the maximal dimension of subspace of $W\subset C^\infty_0(\Sigma)$ such that $Q$ is negative definite on $W$.

We let $A_{ij,k}$ denote $\nabla_{e_k} A_{ij}$.  The Codazzi equation then says that $(A_{ij,k})$ is fully symmetric.  The Riemann curvature tensor will be denoted by $R_{ijkl} = \la \nabla_i\nabla_j \partial_k - \nabla_j \nabla_i \partial_k, \partial_l \ra$.  Since our ambient manifold is $\mathbb{R}^{n+1}$, the Gauss equation is given by
\begin{equation}
\label{eq:03}
R_{ijkl}=A_{il}A_{jk} - A_{ik}A_{jl}
\end{equation}

$\omega_n$ will denote the $n$-volume of the unit ball in $\mathbb R^n$.

\subsection{Structure of This Paper}
The paper will be organized as follows:

 In Sections 1-2, we develop the problem we are trying to solve. In Section \ref{sc:firstvar}, we discuss the first and second variations of both the weighted area $\Af$ and the weighted volume $\Vf$ for a general weight $e^f$.  We also discuss what it means for a compact normal variation of an immersed surface to be \emph{variation preserving $\Vf$.}

In Section \ref{sc:gauss}, we consider the explicit case of Gaussian weighted area $\dAu$ and Gaussian weighted volume $\dVu$. This includes explicit formulas for variations and the Jacobi operator.

 In Section \ref{sc:planes}, we prove that hyper-planes satisfy both the mean curvature condition \eqref{eq:curvcond} and the stability condition \eqref{eq:minimizing}.

 In Section \ref{sc:index}, we prove Theorem \ref{th:index} on how bounds of $\Ind Q$ force $\Sigma$ to split off a linear space.

 In Section \ref{sc:intest}, we prove an estimate for $\int_{\Sigma\cap B_R(0)} |A|^2 \dAu$ that will be necessary to establish our pointwise curvature decay estimates. In Section \ref{sc:ptest}, we use the integral estimate to obtain pointwise decay estimates for $|A|$.

 Appendix \ref{sc:mean} is devoted to reformulating the Mean Value Inequality for minimal hyper-surfaces to a form useful for hyper-surfaces satisfying the mean curvature condition \eqref{eq:curvcond}. Explicitly, we show a mean value type inequality for hyper-surfaces $\Sigma$ with bounded $|H|\leq M$.


\section*{Acknowledgements}
The authors would like to thank Professor William Minicozzi for his valuable support and direction. They would also like to thank the reviewer for their valuable help and suggestions.

%
%

\section{First and Second Variations} \label{sc:firstvar}

We will discuss the relationship between the mean curvature condition \eqref{eq:curvcond} and $\delta \Au$, and we will also discuss the relationship between the stability condition \eqref{eq:minimizing} and $\delta^2 \Au$. First, in this section we compute the first and second variations for an arbitrary weight $e^f$ where $f: \mathbb R^{n+1}\to\mathbb R$. We define a weighted area measure by $\dAf = e^f \dA$ and a weighted volume measure by $\dVf = e^f \dV.$ We call a compact variation $F(t,x): (-\epsilon, \epsilon)\times\Sigma \to \mathbb R^{n+1}$ a normal variation if $\partial_t F$ is normal to $\Sigma_t = F(t,\Sigma)$ for all $t$.

Let us motivate what it means for a normal variation $F(t,x) : (-\epsilon, \epsilon) \times \Sigma \to \mathbb R^{n+1}$ to ``preserve $\Vf$" in the case that $\Sigma$ is immersed. Consider the case that $\Sigma$ is embedded and $\Sigma = \partial \Omega$. Let $\Sigma_t = F(t, \Sigma)$ and $\Omega_t = F(t, \Omega)$. In the case of a variation preserving $\Vf$, we have that $\Vf(\Omega_t) = \Vf(\Omega)$. One has that $\partial_t\Vf(\Omega_t) = \int_{\Sigma_t} \la \partial_t F(t,x), N(x) \ra \dAf = 0$. So, we generalize the concept of a normal variation $F(t,x)$ preserving $\Vf$ to immersed hyper-surfaces $\Sigma$ by defining that $\int_{\Sigma_t} \la \partial_t F, N \ra \dAf = 0$ for all t.

It is now clear that if a normal variation $F(t,x)$ preserves $\Vf$, then we have that $u(x) = \la \partial_t F(0,x), N(x)\ra$ satisfies $\int_\Sigma u \dAf = 0.$ An argument similar to that given in Barbosa-do Carmo \cite{barbosadocarmo} shows the opposite is true as well:
\begin{lemma} \label{lm:volpresexist}
Let $\Sigma$ be an immersed two-sided hyper-surface, and let $u\in C^1_0(\Sigma)$ such that $\int_\Sigma u \dAf = 0$. Then, there exists a compact normal variation $F(t,x): (-\epsilon,\epsilon)\times\Sigma\to \mathbb R^{n+1}$ such that $u(x) = \la\partial_t F(0,x),N(x)\ra$ and $F(t,x)$ is $\Vf$ preserving.
\end{lemma}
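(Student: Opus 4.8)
The plan is to mimic the Barbosa--do Carmo construction \cite{barbosadocarmo} adapted to the weighted volume functional $\Vf$. Given $u \in C^1_0(\Sigma)$ with $\int_\Sigma u \dAf = 0$, the strategy is to first build a naive normal variation whose initial speed is $u$, then correct it by a time-dependent reparametrization in a ``homothety-like'' auxiliary direction so as to enforce the constraint $\int_{\Sigma_t}\la \partial_t F, N\ra \dAf = 0$ for all $t$, not merely at $t = 0$.

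Concretely, let $K = \operatorname{supp} u$, a compact subset of $\Sigma$. First I would fix a second function $\phi \in C^\infty_0(\Sigma)$ with $\int_\Sigma \phi \dAf \ne 0$ (such $\phi$ exists since $\dAf$ is a positive measure and $\Sigma$ is noncompact-or-has-boundary away from $K$, or one may simply take $\phi$ supported near an interior point), and consider the two-parameter normal variation
\begin{equation}
G(t,s,x) = x + \big(t\, u(x) + s\, \phi(x)\big) N(x) + O\big((|t|+|s|)^2\big),
\end{equation}
more precisely defined via the normal exponential map of $\Sigma$ so that $G$ is smooth and $\partial_t G(0,0,x) = u(x)N(x)$, $\partial_s G(0,0,x) = \phi(x)N(x)$. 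Define the weighted volume functional along this family; in the embedded model case it is $\Vf(\Omega_{t,s})$, and in general one uses the generalized definition $V(t,s) = \int_0^t \int_{\Sigma_{\tau,s}} \la \partial_\tau G, N\ra \dAf \, d\tau$ (with the analogous $s$-version), which is well defined because $G$ is compactly supported. The key computation is $\partial_s V(0,0) = \int_\Sigma \phi \dAf \ne 0$. By the implicit function theorem there is a smooth function $s = s(t)$ defined for $|t|<\epsilon$ with $s(0) = 0$ and $V(t,s(t)) = 0$ for all $t$. Moreover differentiating the constraint at $t = 0$ and using $\int_\Sigma u \dAf = 0$ gives $s'(0) = 0$, so the corrected variation $F(t,x) := G(t,s(t),x)$ satisfies $\partial_t F(0,x) = u(x)N(x) + s'(0)\phi(x)N(x) = u(x)N(x)$, hence $\la \partial_t F(0,x), N(x)\ra = u(x)$ as required.

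It remains to check that $F$ is a legitimate \emph{normal} compact variation that is $\Vf$-preserving in the sense defined above. Compactness of support is inherited from $G$ (for $\epsilon$ small the supports stay within a fixed compact set). That $F$ preserves $\Vf$ is precisely the identity $\frac{d}{dt}\big[\,\cdot\,\big]$ version of $V(t,s(t)) \equiv 0$: differentiating in $t$ yields $\int_{\Sigma_t} \la \partial_t F, N\ra \dAf = 0$ for every $t$, which is the generalized volume-preservation condition. The one subtlety — which I expect to be the main technical obstacle — is that $F(t,\cdot) = G(t,s(t),\cdot)$ need not be a \emph{normal} variation for $t \ne 0$, because combining two normal deformations via a reparametrization generally introduces a tangential component of $\partial_t F$. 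This is handled exactly as in Barbosa--do Carmo: one post-composes with a family of diffeomorphisms $\psi_t$ of $\Sigma$ (solving an ODE generated by the tangential part of $\partial_t F$) to absorb the tangential component, producing $\tilde F(t,x) = F(t,\psi_t(x))$ which is genuinely normal, has the same image surfaces $\Sigma_t$ (so $\Vf$-preservation is unaffected), and still satisfies $\la \partial_t \tilde F(0,x), N(x)\ra = u(x)$ since $\psi_0 = \mathrm{id}$ and the tangential correction vanishes at $t=0$. The weight $e^f$ plays no essential role beyond appearing uniformly in every integral, so the classical argument goes through verbatim with $\dA$ replaced by $\dAf$ and $\dV$ by $\dVf$.
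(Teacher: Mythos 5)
Your proposal is correct and follows essentially the same route the paper intends: the paper gives no proof of Lemma \ref{lm:volpresexist} beyond citing Barbosa--do Carmo, and your argument is exactly that construction (two-parameter family, implicit function theorem on the generalized weighted volume, $s'(0)=0$ from $\int_\Sigma u \dAf = 0$, then a tangential reparametrization to restore normality) carried over verbatim with $\dA$, $\dV$ replaced by $\dAf$, $\dVf$. The only point worth tightening in a write-up is the well-definedness of $V(t,s)$ for immersed $\Sigma$ (e.g.\ define it as the integral of the pullback of $e^f\dV$ over a homotopy, so that both partial derivatives are the corresponding flux integrals), which is what your chain-rule step at $t\neq 0$ implicitly uses.
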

So we see that $\{u\in C^\infty_0(\Sigma) : \int_\Sigma u \dAf = 0\}$ is the correct subspace of functions to restrict to when considering normal variations preserving $\Vf$.

We now compute the first variation of the weighted area functional. It is standard that $\partial_t (\dA) = uH\dA$, and it is clear that $\partial_t e^f = e^f u \la \nabla f, N \ra.$ For any compact normal variation $F(t,x)$, let $u(t,x) = \la \partial_t F(t, x), N(t,x) \ra$. Then we have that
\begin{equation}\label{eq:1_1}
\partial_t \Af(\Sigma_t) = \int\limits_{\Sigma_t} u \left( H + \la \nabla f , N \ra\right) \dAf.
\end{equation}

By using pairs of approximations to the identity with opposite weights and centered at different points for $u$, we find the following curvature condition must be satisfied by critical hyper-surfaces of $A_f$ for all normal variations preserving $V_f$.

\begin{lemma}\label{lm:1_2}
$\Sigma$ is a hypersurface satisfying $\delta A_f(u) = 0$ for all $\{u \in C^\infty_0(\Sigma): \int_\Sigma u \dAf = 0\}$ if and only if $H+\la \nabla f, N \ra$ is constant on $\Sigma$.
\end{lemma}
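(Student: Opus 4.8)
The plan is to read off both directions from the first variation formula \eqref{eq:1_1}. Evaluating \eqref{eq:1_1} at $t = 0$ gives $\delta \Af(u) = \intS u\,(H + \la \nabla f, N\ra)\,\dAf$ for the function $u(x) = \la \partial_t F(0,x), N(x)\ra$ attached to any compact normal variation $F$, and by Lemma \ref{lm:volpresexist} every $u \in C^\infty_0(\Sigma)$ with $\intS u\,\dAf = 0$ is realized by such a (moreover $\Vf$-preserving) variation. So, writing $g := H + \la \nabla f, N \ra$, the hypothesis ``$\delta\Af(u) = 0$ whenever $\intS u\,\dAf = 0$'' becomes the assertion that $\intS u\, g\,\dAf = 0$ for every $u \in C^\infty_0(\Sigma)$ with $\intS u\,\dAf = 0$, and the lemma says this is equivalent to $g$ being constant.

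For the ``if'' direction I would simply note that if $g \equiv c$, then $\intS u\,g\,\dAf = c\intS u\,\dAf = 0$ for every admissible $u$.

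For the converse I would argue by contraposition, adapting the usual fundamental lemma of the calculus of variations to the mean-zero constraint (this is the ``pairs of approximations to the identity with opposite weights'' idea indicated just before the statement; compare Barbosa--do Carmo \cite{barbosadocarmo}). Assuming $g$ is not constant, choose distinct points $p, q \in \Sigma$ with $g(p) < g(q)$, fix reals $a < b$ with $g(p) < a < b < g(q)$, and use continuity to get disjoint open sets $U \ni p$ and $V \ni q$ with $g < a$ on $U$ and $g > b$ on $V$. Pick nonnegative $\phi, \psi \in C^\infty_0(\Sigma)$, not identically zero, supported in $U$ and $V$ respectively, and form the admissible test function
\begin{equation*}
u \;:=\; \Big(\intS \psi\,\dAf\Big)\,\phi \;-\; \Big(\intS \phi\,\dAf\Big)\,\psi ,
\end{equation*}
which satisfies $\intS u\,\dAf = 0$ by construction. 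Putting $m := \big(\intS \phi\,\dAf\big)\big(\intS \psi\,\dAf\big) > 0$, one then has $\intS u\,g\,\dAf < a m - b m < 0$, contradicting the hypothesis; hence $g$ must be constant.

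The one point that needs genuine care --- the main obstacle, such as it is --- is precisely the constraint $\intS u\,\dAf = 0$: one cannot simply test against a single bump function, so the two-bump combination above is what is needed to convert non-constancy of $g$ into an admissible variation along which $\Af$ strictly decreases. Everything else is routine; in particular, disconnectedness of $\Sigma$ causes no trouble, since $p$ and $q$ are allowed to lie in different components.
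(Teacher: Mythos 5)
Your proof is correct, and it is exactly the argument the paper intends: the paper only sketches the converse via ``pairs of approximations to the identity with opposite weights centered at different points,'' which is precisely your two-bump test function $u = (\intS\psi\,\dAf)\phi - (\intS\phi\,\dAf)\psi$. The easy direction and the reduction to formula \eqref{eq:1_1} are handled as in the paper, so there is nothing to add.
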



Now we discuss the second variation $\delta^2\Af(u)$ for $u$ representing normal variations of $\Sigma$ that are $\Vf$ preserving, i.e. $u\in\{v\in C^\infty_0(\Sigma): \int_\Sigma v \dAf = 0\}$.

\begin{lemma}\label{lm:1_3}
Let $\Sigma$ satisfy $\delta A_f(u) = 0$ for all $u\in C^\infty_0(\Sigma)$ such that $u$ represents a normal variation preserving $\Vf$, i.e. $\int_\Sigma u \dAf = 0$. Then for any $u = \partial_t F(0,x)$ representing a normal variation preserving $\Vf$, we have that
\begin{equation}
 \delta^2 \Af(\Sigma)(u) = -\intS u \left(\triangle u + |A|^2 u - \Hess_f(N,N)u + \la \nabla f, \nabla u\ra\right) \dAf. \label{eq:513}
\end{equation}

\end{lemma}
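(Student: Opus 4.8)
The plan is to differentiate the first variation formula \eqref{eq:1_1} at $t=0$, using the hypothesis that $H + \la\nabla f, N\ra$ is constant (Lemma \ref{lm:1_2}) to kill the terms where the time derivative falls on that factor. Writing $w(t) = H(t) + \la\nabla f, N(t)\ra$ for the (possibly $t$-dependent) mean-curvature-type quantity along the flow, we have $\partial_t \Af(\Sigma_t) = \int_{\Sigma_t} u(t,x)\, w(t)\, \dAf$, and so
\begin{equation}
\delta^2\Af(\Sigma)(u) = \int_\Sigma \big[ (\partial_t u)\, w(0) + u\, (\partial_t w)(0) \big] \dAf + \int_\Sigma u\, w(0)\, \partial_t(\dAf). \label{eq:plan1}
\end{equation}
Since at $t=0$ the surface is critical, $w(0) \equiv H + \la\nabla f, N\ra$ is a \emph{constant}, say $w(0) = \lambda$. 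The first and third terms on the right of \eqref{eq:plan1} then combine into $\lambda \, \partial_t\big(\int_{\Sigma_t} u\, \dAf\big)|_{t=0} = 0$, because $u$ represents a $\Vf$-preserving variation, i.e.\ $\int_{\Sigma_t}\la\partial_t F, N\ra \dAf = 0$ for all $t$. (One has to be slightly careful here: $u(t,x)$ need not be exactly $\la\partial_t F(t,x), N(t,x)\ra$ for $t\ne 0$ unless we choose the variation well, but we are free to take $u(t,x) := \la\partial_t F(t,x), N(t,x)\ra$, which is the natural normal speed, and then the $\Vf$-preserving condition is exactly $\int_{\Sigma_t} u\, \dAf = 0$.) Hence $\delta^2\Af(\Sigma)(u) = \int_\Sigma u\,(\partial_t w)(0)\, \dAf$, and everything reduces to computing $(\partial_t w)(0)$.

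The heart of the matter is therefore the evolution equations for $H$ and for $\la\nabla f, N\ra$ under a normal variation with speed $u$. These are classical: for the normal variation $\partial_t F = uN$ one has $\partial_t N = -\nabla u$ (tangential gradient) and $\partial_t H = -\triangle u - |A|^2 u$ — modulo sign conventions, which I will fix to match the paper's convention $A(X,Y) = \la\nabla_X N, Y\ra$, $H = \Div N$. Then $\partial_t\la\nabla f, N\ra = \la \partial_t(\nabla f), N\ra + \la\nabla f, \partial_t N\ra$. The first piece is $\Hess_f(uN, N)u$-type: since $\nabla f$ is evaluated along $F(t,x)$, $\partial_t(\nabla f) = \Hess_f(\partial_t F) = u\,\Hess_f(N)$ as an ambient vector, contributing $u\,\Hess_f(N,N)$. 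The second piece is $\la\nabla f, -\nabla u\ra = -\la\nabla f, \nabla u\ra$ (tangential). Assembling,
\begin{equation}
(\partial_t w)(0) = -\triangle u - |A|^2 u + u\,\Hess_f(N,N) - \la\nabla f, \nabla u\ra,
\end{equation}
and multiplying by $u$ and integrating against $\dAf$ gives exactly \eqref{eq:513}.

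The main obstacle I anticipate is not conceptual but bookkeeping: getting every sign right in $\partial_t N$, $\partial_t H$, and in the identification of the $t$-derivative of the ambient vector field $\nabla f$ restricted to the moving surface, all consistent with the sign conventions fixed in the introduction (in particular $H = \Div N$, which for a sphere oriented outward is positive). A secondary subtlety is the justification that the third term of \eqref{eq:plan1} really does pair up with the first to form $\lambda\,\partial_t(\int u\,\dAf)$ — this uses $\partial_t(\dAf) = u(H + \la\nabla f, N\ra)\dAf$, i.e.\ the same integrand appearing in \eqref{eq:1_1}, so that $\partial_t(\int_{\Sigma_t} u\, \dAf) = \int (\partial_t u)\dAf + \int u\cdot u\,w\,\dAf$; one then notes $w(0)$ is constant to pull it out. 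Provided the conventions are handled carefully, the computation is otherwise routine, and the key structural input — that criticality makes $H + \la\nabla f, N\ra$ constant so the ``bad'' terms assemble into the derivative of a quantity that vanishes identically along $\Vf$-preserving variations — is exactly the mechanism that produces the clean Jacobi-type expression \eqref{eq:513}.
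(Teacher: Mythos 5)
Your proposal is correct and follows essentially the same route as the paper: differentiate the first variation formula \eqref{eq:1_1}, use criticality to pull out the constant $H+\la\nabla f,N\ra$ so that the terms where the derivative misses that factor assemble into $\lambda\,\partial_t\bigl(\int_{\Sigma_t}u\,\dAf\bigr)=0$ by the $\Vf$-preserving condition, and then apply the standard evolution equations $\partial_t N=-\nabla u$, $\partial_t H=-\triangle u-|A|^2u$, and $\partial_t(\nabla f)=u\,\Hess_f(N)$. The extension $u(t,x)=\la\partial_t F(t,x),N(t,x)\ra$ that you flag as a subtlety is exactly the choice made in the paper.
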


\begin{proof}
Given $u\in\{v\in C^\infty_0(\Sigma): \int_\Sigma v\dAf = 0\}$, let $F(t,x)$ be a normal variation preserving $\Vf$ such that $u(x) = \la \partial_t F(0, x)\ra$. Extend $u(x)$ to be $u(x,t) = \la \partial_t F(t,x), N(x,t)\ra$. Then we have that $\int_{\Sigma_t} u \dAf = 0$ for all $t$.

Let $H + \la \nabla f, N \ra = C$ a constant function on $\Sigma$. From \eqref{eq:1_1}, we have that
\begin{align}
\delta^2 \Af(\Sigma)(u) & = \partial_t\left(\int\limits_{\Sigma_t} (H + \la \nabla f, N \ra )u\dAf \right)\\
& = \intS \partial_t(H + \la \nabla f, N \ra) (u\dAf) + \intS (H+\la \nabla f, N \ra) \partial_t (u\dAf),\\
& = \intS \partial_t(H + \la \nabla f, N \ra) (u\dAf) + C \partial_t\left(\int\limits_{\Sigma_t} u\dAf\right), \\
& =  \intS \partial_t(H + \la \nabla f, N \ra) (u\dAf) .
\end{align}
 It is standard in the literature that for Euclidean space we have that $\partial_t N = -\nabla u$ and that $\partial_t H = -\triangle u - |A|^2 u$. For an explanation of these formulas, see Colding-Minicozzi \cite{cm2012}. Also, it is clear that $\la \partial_t \nabla f, N \ra = \Hess_f(N,N)u^2$. Therefore we get that
 \begin{equation}
 \delta^2 \Af(\Sigma)(u) = -\intS u \left(\triangle u + |A|^2 u - \Hess_f(N,N)u + \la \nabla f, \nabla u\ra\right) \dAf.
 \end{equation}

\end{proof}

%
%

\section{The Gaussian Measure} \label{sc:gauss}

Here we consider the $(n+1)$-dimensional Gaussian space modeled by $\mathbb R^{n+1}$ with the weighted volume measure $\dVu = e^{-|x|^2/4}\dV$. We also consider the associated weighted area form $\dAu = e^{-|x|^2/4}\dA$. Results on the first variation and the second variation are provided by our lemmas in Section \ref{sc:firstvar} when using $f = -|x|^2/4$. One should note that the weighted measures $\dVu$ and $\dAu$ have the same weight $e^{-|x|^2/4}$. It is for this reason that our variational problem is not equivalent to a classical Isoperimetric Problem for a metric that is conformal to the standard metric on $\mathbb R^{n+1}$. This is in contrast to the case of self-shrinkers of the mean curvature flow, which may be realized as minimal sub-manifolds of $\mathbb R^{n+1}$ for a metric $g_{ij}=e^{-|x|^2/2n}\delta_{ij}$, see Colding-Minicozzi \cite{cm2012}.

Applying \eqref{eq:1_1}, we see that
\begin{equation}
\delta \Au(\Sigma)(u) = \intS u\left(H - \frac{1}{2}\la x, N \ra\right) \du\label{eq:62}.
\end{equation}
Now, we note that the compact normal variations preserving $\Vu$ are exactly represented by the space $\{u\in C^\infty_0(\Sigma) : \int_\Sigma u \dAu = 0\}$.

From Lemma \ref{lm:1_2}, we see that the condition $\delta \Au(\Sigma)(u) = 0$ for all $u\in C^\infty_0(\Sigma)$ preserving $\Vf$ is equivalent to
\begin{equation}
H = \frac{1}{2} \la x, N \ra + C,\label{eq:63}
\end{equation}
where $C$ is a constant on $\Sigma$.  

We point out some readily available examples of hyper-surfaces satisfying \eqref{eq:63}:
\begin{example}
Any hyper-plane in $\mathbb R^{n+1}$ (not necessarily passing though the origin) satisfies \eqref{eq:63}, as $\la x, N \ra$ is constant and $H=0$.
\end{example}
\begin{example}
For any sphere $|x - x_0| = R$, we have that $N = \frac{x-x_0}{R}$ and $H = \frac{n}{R}$. We see that $\la x, N \ra = R + (1/R) \la x_0, x-x_0\ra.$ Hence, $\frac{n}{R} - (R/2) - (1/2R)\la x_0, x-x_0\ra = C$ a constant. Therefore, we see that the only spheres that satisfy \eqref{eq:63} are the spheres centered at the origin.
\end{example}
\begin{example}
For any cylinder $\{ (x,y) \in S^k \times \mathbb R^{n-k} : |x-x_0|=R\}$, we have that $N = (1/R)(x-x_0)$ and $H = k/R$. Hence, $k/R - R/2 - (1/2R)\la x_0, x-x_0\ra = C$ a constant. Therefore, the only cylinders that satisfy \eqref{eq:63} are those that are cylinders over spheres $S^k$ in some $(k+1)$-plane and centered at the origin.
\end{example}
\begin{example}
As noted before, if $C = 0$, then the hyper-surfaces satisfying \eqref{eq:63} are the self-shrinkers  of the mean curvature flow. There are many examples of these self-shrinkers, including Angenent's self-shrinking torus \cite{angenent} and the noncompact examples of Kapouleas, Kleene, and M{\o}ller \cite{KKM2011} and Nguyen \cite{nguyen}.
\end{example}

Using Lemma \ref{lm:1_3}, we see that for any hyper-surface $\Sigma$ satisfying \eqref{eq:63} and any compact normal variation preserving $\Vu$ represented by $u\in\{v\in C^\infty_0(\Sigma):\int_\Sigma v \dAu = 0\}$, we have that
\begin{equation}\label{eq:ddAg}
\delta^2 \Au(\Sigma)(u) = -\int\limits_\Sigma u L u \dAu,
\end{equation}
where $L$ is defined by
\begin{equation}
L u = \triangle u - \frac{1}{2}\nabla_{x^T}u + |A|^2 u + \frac{1}{2}u.
\end{equation}
Here, $x^T$ is the tangential part of the position vector, and both of the operators $\triangle$ and $\nabla$ are defined using the metric on $\Sigma$ induced by the Euclidean metric. We will also make use of the second and first order parts of $L$ which we will call $\mathcal L$. So $\mathcal L$ is defined by
\begin{equation}
\mathcal{L} u = \triangle u - \frac{1}{2}\nabla_{x^T} u.
\end{equation}
Both $L$ and $\mathcal L$ are self-adjoint with respect to $\dAu$. Furthermore, we have that $\int_\Sigma \la \nabla f, \nabla g\ra \dAu = - \int_\Sigma f \mathcal L g \dAu.$

For convenience, we also define the quadratic form $Q$ on $C^\infty_0(\Sigma)$ by $Q(u,v) = -\int_\Sigma u L v \dAu$.

We then say that $\Sigma$ is \emph{$\Vu$ preserving stable} (or sometimes just \emph{stable}) if $Q(u,u) \geq 0$ for every $\Vu$ preserving compact normal variation represented by the function $u$ (i.e. stability condition \eqref{eq:minimizing}). Remember, that Lemma \ref{lm:volpresexist} shows us that every such variation is represented by such a $u$, and vice versa.

%
%

\section{Stability of the Hyper-planes} \label{sc:planes}

\begin{theorem}
Hyperplanes are $\Vu$ preserving stable hyper-surfaces.
\end{theorem}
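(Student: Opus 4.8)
The plan is to reduce the stability inequality on a hyperplane to the classical Gaussian Poincar\'e inequality, equivalently the spectral gap of the Ornstein--Uhlenbeck operator. First I would observe that on any hyperplane $\Sigma\subset\mathbb R^{n+1}$ one has $|A|^2\equiv 0$, so that $Lu = \mathcal{L}u + \tfrac12 u$; combining this with the integration-by-parts identity $\int_\Sigma \langle \nabla f,\nabla g\rangle\dAu = -\int_\Sigma f\,\mathcal{L}g\dAu$ recorded in Section \ref{sc:gauss} gives
\[
Q(u,u) = -\intS u\,Lu\dAu = \intS |\nabla u|^2\dAu - \frac12\intS u^2\dAu .
\]
Thus it suffices to prove the weighted Poincar\'e inequality $\intS |\nabla u|^2\dAu \geq \frac12\intS u^2\dAu$ for every $u\in C^\infty_0(\Sigma)$ with $\intS u\dAu = 0$.

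Next I would set up coordinates: writing $\Sigma = \{x : \langle x,\nu\rangle = a\}$ for a fixed unit normal $\nu$ and constant $a$, identify $\Sigma$ isometrically with $\mathbb R^n$ via $x = y + a\nu$ with $y\perp\nu$. Under this identification $\triangle$ is the flat Laplacian on $\mathbb R^n$, the tangential position vector is $x^T = y$ so that $\nabla_{x^T}u = y\cdot\nabla_y u$, and $\dAu = e^{-a^2/4}e^{-|y|^2/4}\,dy$. The constant $e^{-a^2/4}$ cancels from both sides of the desired inequality, so the statement reduces to a fact about the Gaussian measure $d\gamma = e^{-|y|^2/4}\,dy$ on $\mathbb R^n$ and the operator $\mathcal{L} = \triangle - \tfrac12 y\cdot\nabla$, which is self-adjoint on $L^2(d\gamma)$ and whose associated Dirichlet form is $\int |\nabla u|^2\,d\gamma$.

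The heart of the matter is then the spectral gap. I would expand $u$ in the basis of Hermite polynomials adapted to $d\gamma$ (tensor products of the one-variable Hermite polynomials), observe that $\mathcal{L}$ acts diagonally with eigenvalue $-k/2$ on polynomials of degree $k$, so that $\ker\mathcal{L}$ is exactly the constants and $-\mathcal{L}\geq \tfrac12$ on their $L^2(d\gamma)$-orthogonal complement. Since the constraint $\intS u\dAu = 0$ says precisely that $u$ is $L^2(d\gamma)$-orthogonal to the constants, this yields $-\int u\,\mathcal{L}u\,d\gamma \geq \tfrac12\int u^2\,d\gamma$, i.e. $\int |\nabla u|^2\,d\gamma \geq \tfrac12\int u^2\,d\gamma$, and hence $Q(u,u)\geq 0$. (Alternatively one may simply invoke the Gaussian Poincar\'e inequality, or prove it by tensorization from the one-dimensional case.)

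The main obstacle --- really the only nontrivial input --- is justifying the spectral gap for the test functions at hand: one must check that the Hermite-expansion argument, which is cleanest for polynomials or Schwartz functions, applies to $u\in C^\infty_0(\Sigma)$. This is fine, since $C^\infty_0\subset L^2(d\gamma)$, the Hermite polynomials form an orthonormal basis of $L^2(d\gamma)$, and $\int |\nabla u|^2\,d\gamma < \infty$, so the Parseval identities needed for both the $L^2$ norm and the Dirichlet form are valid. Everything else --- the vanishing of $|A|^2$, the integration by parts, and the cancellation of the $e^{-a^2/4}$ factor --- is routine.
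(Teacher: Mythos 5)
Your proof is correct, and it reaches the same spectral fact as the paper by a slightly different route. The paper conjugates $L$ by the ground state $e^{-|x|^2/8}$ to obtain a shifted harmonic oscillator, reads off its entire spectrum from the Hermite functions, and concludes that the bottom eigenvalue $-1/2$ of $L$ has the constants as its eigenspace while all other eigenvalues are non-negative. You instead integrate by parts to write $Q(u,u)=\intS |\nabla u|^2 \dAu - \tfrac12 \intS u^2 \dAu$ and invoke only the spectral gap of the Ornstein--Uhlenbeck operator $\mathcal{L}=\triangle-\tfrac12\,y\cdot\nabla_y$, i.e.\ the Gaussian Poincar\'e inequality with the sharp constant $\tfrac12$ for the measure $e^{-|y|^2/4}\,dy$. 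The two are unitarily equivalent statements (the ground-state transform carries Hermite polynomials to Hermite functions), but your packaging is leaner: it needs only the first nonzero eigenvalue rather than the full spectrum, it never leaves the weighted $L^2$ space in which the problem is posed, and the Poincar\'e inequality admits an elementary proof by tensorization from one dimension if one wishes to avoid spectral theory altogether. What the paper's presentation buys in exchange is the explicit identification of all eigenvalues $(k-1)/2$ of $L$, which makes transparent not just stability but also that the constants are exactly the negative eigenspace. Your reduction of the off-origin hyperplane to the one through the origin (the factor $e^{-a^2/4}$ cancelling) and your care about applying Parseval to $C^\infty_0$ test functions are both fine and match the paper's treatment.
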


\begin{proof}
We begin by observing that if a hyper-plane does not pass through the origin, then without any loss in generality, it may be considered to be the plane $x_{n+1} = c$.  A change of variables $x \rightarrow x - (0,0,...,c)$ shifts this plane to pass through the origin and changes the quadratic functional $Q$ by the constant factor $e^{-|c|^2 / 4}$. Therefore, it suffices to consider the stability of a hyper-plane through the origin.

For such a hyper-plane, since $A\equiv 0$, the operator $L$ takes the form 
\begin{equation}
L u = \left(\triangle - \frac{1}{2}\nabla_{x^T}+\frac{1}{2}\right)u.
\end{equation}
Such an operator is known to be comparable to the harmonic oscillator, see Kapouleas-Kleene-M\o ller \cite{KKM2011}. We temporarily remove our restriction to volume preserving variations and consider all possible variations.  The operator $L$ may be factored as
\begin{equation}
L u = e^{|x|^2 / 8} \left(\triangle - \frac{|x|^2}{16} + \frac{n+2}{4}\right) e^{-|x|^2 / 8}u\label{eq:71}.
\end{equation}
Here, the new operator
\begin{equation}
H_x \equiv \left(\triangle - \frac{|x|^2} { 16} + \frac{n+2}{4}\right) \label{eq:72}
\end{equation}
is being applied to $e^{-|x|^2 / 8}u$.  $H_x$ may be viewed as a shifted version of the harmonic oscillator
\begin{equation}
\tilde{H} \equiv \triangle - |x|^2 \label{eq:73}
\end{equation}
Indeed, a change of variables $x = 2y$ gives us the operator
\begin{equation}
H_y \equiv \frac{1}{4}\triangle - \frac{|x|^2} { 4} + \frac{n+2}{4}, \label{eq:74}
\end{equation}
and $\tilde{H} = 4H_y - (n+2)$.

The eigenvalues of $\tilde{H}$ (i.e. $\lambda$ such that $\tilde{H} u = -\lambda u$) are well-known to be $n + 2k$ for $k = 0,1,2,$..., and the eigenvectors are products of $e^{-|y|^2 / 2}$ and Hermite polynomials.  So the eigenvalues of $H_y$ (which are equivalent to the eigenvalues of $L$) take the form $(k-1)/2$.  Except for the first eigenvalue, these are all positive.

Observe that $n$ is the lowest eigenvalue of $\tilde{H}$ and has an eigenspace spanned by $e^{-|y|^2 / 2}$.  Undoing the change of variables, the lowest eigenvalue of $L$ is $-1/2$.  Furthermore, the lowest eigenspace of $L$ is spanned by the constant functions. Note that if $u\in C^\infty_0(\Sigma)$ such that $\int_\Sigma u \dAu = 0$, then $u$ is orthogonal to the constant functions under the weighted $\dAu$ measure. Since the other eigenvalues of $L$ are non-negative, we then have that the hyper-planes are $\Vf$ preserving stable hyper-surfaces.

\end{proof}

%
%

\section{Index of Non-planar Hyper-surfaces} \label{sc:index}


We now want to examine the the index of nonplanar hypersurfaces. We'll begin with the compact case (which is very simple because we do not need any cutoff functions), and then generalize to the non-compact case. First, however, we remark on an important identity.

\begin{lemma}
Let $\Sigma \subset \mathbb R^{n+1}$ satisfy $H = (1/2)\la x, N \ra + C$, and let $v \in \mathbb R^{n+1}$ be a constant vector. Then 
\begin{align}
L\la v,N \ra &= \frac{1}{2}\la v,N \ra\label{eq:81}
\end{align}
\end{lemma}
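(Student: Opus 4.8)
The plan is to verify the identity $L\langle v, N\rangle = \tfrac12 \langle v, N\rangle$ by a direct computation using the structure equations of $\Sigma$ together with the mean curvature condition \eqref{eq:63}. The quantity $\langle v, N\rangle$ is a natural ``Jacobi-type'' function because it records the infinitesimal effect of translating $\Sigma$ by the constant vector $v$, and translations are ambient isometries of $\mathbb R^{n+1}$ that move $\Sigma$ but do not change $|A|$; so one expects $\langle v, N\rangle$ to sit in the kernel of the operator governing $H - \tfrac12\langle x, N\rangle$, and since $H - \tfrac12 \langle x, N\rangle \equiv C$ is constant, the linearization should see exactly the lowest eigenvalue term. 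More concretely, I would first record the standard first-order formulas: choosing a local orthonormal frame $\{e_i\}$ on $\Sigma$, we have $\nabla_{e_i} N = \sum_j A_{ij} e_j$ (so $\nabla_{e_i}\langle v, N\rangle = \langle v, \nabla_{e_i} N\rangle = \sum_j A_{ij}\langle v, e_j\rangle$), and $\nabla_{e_i}\langle v, e_j\rangle = \langle v, \nabla_{e_i}e_j\rangle = A_{ij}\langle v, N\rangle$ after projecting the ambient (flat) connection onto $\Sigma$ and its normal.

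Next I would compute the rough Laplacian. Differentiating again,
\[
\triangle \langle v, N\rangle = \sum_{i}\nabla_{e_i}\Big(\sum_j A_{ij}\langle v, e_j\rangle\Big) = \sum_{i,j} A_{ij,i}\langle v, e_j\rangle + \sum_{i,j} A_{ij}\nabla_{e_i}\langle v, e_j\rangle,
\]
and then use Codazzi ($A_{ij,i} = A_{ii,j} = \nabla_{e_j}H$, summing on $i$) in the first term and the formula $\nabla_{e_i}\langle v, e_j\rangle = A_{ij}\langle v, N\rangle$ in the second, which produces $\sum_j (\nabla_{e_j}H)\langle v, e_j\rangle + |A|^2\langle v, N\rangle = \langle v^T, \nabla H\rangle + |A|^2\langle v, N\rangle$. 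For the drift term I would similarly get $\nabla_{x^T}\langle v, N\rangle = \sum_i \langle x, e_i\rangle A_{ij}\langle v, e_j\rangle$ (summing appropriately), i.e. it pairs $v^T$ against $A(x^T, \cdot)$. Now I substitute the mean curvature condition: $H = \tfrac12\langle x, N\rangle + C$ gives $\nabla_{e_j} H = \tfrac12 \nabla_{e_j}\langle x, N\rangle = \tfrac12\big(\langle e_j, N\rangle + \langle x, \nabla_{e_j}N\rangle\big) = \tfrac12 \sum_k A_{jk}\langle x, e_k\rangle$, since $\langle e_j, N\rangle = 0$. Hence $\langle v^T, \nabla H\rangle = \tfrac12 \sum_{j,k} A_{jk}\langle x, e_k\rangle\langle v, e_j\rangle$, which is precisely $\tfrac12 \nabla_{x^T}\langle v, N\rangle$. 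Plugging all of this into $Lu = \triangle u - \tfrac12 \nabla_{x^T} u + |A|^2 u + \tfrac12 u$ with $u = \langle v, N\rangle$, the $\langle v^T, \nabla H\rangle$ term cancels exactly against $\tfrac12 \nabla_{x^T}\langle v, N\rangle$, the $|A|^2$ terms cancel, and what survives is the $\tfrac12 u$ term, giving $L\langle v, N\rangle = \tfrac12\langle v, N\rangle$.

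I do not anticipate a serious obstacle here — the proof is a bookkeeping computation — but the step requiring the most care is the correct application of the Codazzi equation and the sign/normalization conventions in the second covariant derivative of $\langle v, N\rangle$: one must be consistent with the paper's conventions $A(X,Y) = \langle \nabla_X N, Y\rangle$ and $H = \operatorname{div} N = \sum_i A_{ii}$, and keep track of the fact that the ambient connection is flat so all the ``curvature'' contributions come only through the Gauss and Codazzi relations and the splitting of an ambient vector into tangential and normal parts. An alternative, essentially equivalent, route is to note that $\mathcal{L}\langle v, e_j\rangle$-type identities (or the known fact that $\mathcal L\langle v, N\rangle + |A|^2\langle v, N\rangle = \langle v^T, \nabla(H - \tfrac12\langle x, N\rangle)\rangle$ after simplification) reduce immediately once $H - \tfrac12\langle x,N\rangle$ is replaced by the constant $C$, whose gradient vanishes; I would present whichever organization makes the cancellation most transparent.
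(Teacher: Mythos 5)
Your overall strategy is exactly right and is in fact the same computation the paper invokes: the paper's ``proof'' is a one-line citation to the corresponding self-shrinker identity in Colding--Minicozzi, whose content is precisely the chain you describe (differentiate $\la v, N\ra$, apply Codazzi to convert $\sum_i A_{ij,i}$ into $\nabla_{e_j}H$, substitute $\nabla H = \tfrac12 A(x^T,\cdot)$ from the curvature condition, and watch the drift term cancel). So there is no missing idea.

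There is, however, a sign inconsistency in the one step you yourself flagged as delicate. Under the paper's convention $A(X,Y)=\la \nabla_X N, Y\ra$, differentiating $\la e_j, N\ra = 0$ gives $\la \bar\nabla_{e_i}e_j, N\ra = -A_{ij}$, so at the center of normal coordinates
\begin{equation*}
\nabla_{e_i}\la v, e_j\ra = -A_{ij}\la v, N\ra, \qquad\text{hence}\qquad \triangle\la v,N\ra = \la v^T,\nabla H\ra - |A|^2\la v,N\ra,
\end{equation*}
not $+|A|^2\la v,N\ra$ as you wrote. (Sanity check on $S^n_R$: $\la v,N\ra=\la v,x\ra/R$ is an eigenfunction of $\triangle_{S^n_R}$ with eigenvalue $-n/R^2=-|A|^2$, while $\nabla H=0$.) It is this minus sign that cancels the $+|A|^2 u$ appearing in $L$; with the plus sign you stated, carrying the computation through literally would yield $L\la v,N\ra = \tfrac12\la v,N\ra + 2|A|^2\la v,N\ra$, contradicting the cancellation you then assert. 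Fix that one sign and the argument closes correctly; everything else (the Codazzi step, the identity $\la v^T,\nabla H\ra = \tfrac12\nabla_{x^T}\la v,N\ra$, and the final cancellation of the drift terms) is correct as written.
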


\begin{proof}
The proof is identical to the corresponding proof for self-shrinkers in Colding-Minicozzi \cite{cm2012}. In particular, the main computation on $\nabla H$ is the same for self-shrinkers and for our hypersurfaces.
\end{proof}




\begin{lemma}\label{lm:compact}
Let $\Sigma$ be a compact hypersurface that satisfies $H = (1/2)\la x, N \ra + C$. Then $Q$ is negative definite on $\Span\{1, \la v, N\ra :v\in\mathbb R^{n+1}\}$. 
\end{lemma}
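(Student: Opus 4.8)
The plan is to compute $Q$ explicitly on the finite-dimensional space $V = \Span\{1, \la v, N\ra : v\in\mathbb R^{n+1}\}$ using the two known identities $L\la v,N\ra = \frac12 \la v,N\ra$ and $L1 = |A|^2 + \frac12$, and then show the resulting quadratic form is negative definite. Since $\Sigma$ is compact there are no cutoff issues: every function below is automatically in $C^\infty_0(\Sigma)$, and integration by parts against $\dAu$ produces no boundary terms.

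First I would record that for $u = a + \la v, N\ra$ with $a\in\mathbb R$ and $v\in\mathbb R^{n+1}$, linearity of $L$ gives
\begin{equation}
Lu = a\left(|A|^2 + \tfrac12\right) + \tfrac12\la v,N\ra = \tfrac12 u + a|A|^2.
\end{equation}
Hence
\begin{equation}
Q(u,u) = -\intS u\, Lu \dAu = -\tfrac12\intS u^2 \dAu - a\intS u\, |A|^2 \dAu.
\end{equation}
The first term is manifestly $\leq 0$, so the danger is entirely in the cross term $-a\intS (a + \la v,N\ra)|A|^2\dAu = -a^2\intS |A|^2\dAu - a\intS \la v,N\ra |A|^2 \dAu$. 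To control the indefinite piece $-a\intS\la v,N\ra |A|^2\dAu$ I would want an identity relating $\intS \la v,N\ra |A|^2 \dAu$ to something manageable; the natural move is to integrate the identity $L\la v,N\ra = \frac12\la v,N\ra$ against the constant function $1$: since $\intS 1\cdot L\la v,N\ra \dAu = \intS \la v,N\ra L1 \dAu$ by self-adjointness of $L$ with respect to $\dAu$, we get $\frac12\intS \la v,N\ra\dAu = \intS \la v,N\ra(|A|^2 + \frac12)\dAu$, i.e. $\intS \la v,N\ra |A|^2 \dAu = 0$. This kills the cross term exactly, leaving
\begin{equation}
Q(u,u) = -\tfrac12\intS u^2 \dAu - a^2\intS |A|^2 \dAu \leq -\tfrac12\intS u^2 \dAu \leq 0,
\end{equation}
with equality only if $u\equiv 0$, which is negative definiteness.

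The one genuine subtlety — and the step I expect to require the most care — is making sure the parametrization of $V$ does not hide a degeneracy: a priori the map $(a,v)\mapsto a + \la v,N\ra$ need not be injective (for instance on a hyperplane through the origin with unit normal $N_0$, the functions $1$ and $\la N_0, N\ra$ coincide), so "$Q$ negative definite on $V$" must be read as: $Q(u,u) < 0$ for every nonzero $u\in V$, which is precisely what the displayed inequality gives since $\intS u^2\dAu > 0$ whenever $u\not\equiv 0$. No independence of the spanning set is claimed or needed. I would also double-check the identity $L1 = |A|^2 + \frac12$ directly from $L u = \triangle u - \frac12\nabla_{x^T}u + |A|^2 u + \frac12 u$ (the first two terms vanish on constants), and confirm that self-adjointness of $L$ against $\dAu$ — already asserted in Section~\ref{sc:gauss} — is what legitimizes the integration by parts that yields $\intS \la v,N\ra|A|^2\dAu = 0$. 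With those in hand the proof is three lines of algebra.
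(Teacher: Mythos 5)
Your proof is correct and follows essentially the same route as the paper: both rest on $L\la v,N\ra = \tfrac12\la v,N\ra$ together with the orthogonality $\intS |A|^2 \la v,N\ra \dAu = 0$ (which the paper also derives by integrating $L\la v,N\ra = \tfrac12\la v,N\ra$ over compact $\Sigma$), yielding $Q(u,u) = -\tfrac12\intS u^2\dAu - a^2\intS|A|^2\dAu < 0$. Your treatment of the general element $a + \la v,N\ra$ in one stroke, and your remark that negative definiteness must be read on the functions themselves rather than on the parameters $(a,v)$, are minor but genuine improvements in exposition over the paper's split into the cases $a=0$ and $a=1$.
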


\begin{proof}
From \eqref{eq:81} we have that $Q$ is negative definite on $\Span\{\la v, N \ra : v\in\mathbb R^{n+1}\}$. So it is sufficient to check that $Q(1+u, 1+u)<0$ for some $u = \la v, N\ra$.

We use the divergence theorem and $Lu= (1/2) u$ to get that
\begin{align}
\frac{1}{2}\int_\Sigma u \dAu & = \intS L u \dAu, \\ 
& = \intS \mathcal{L} u \dAu + \intS \left(|A|^2 + \frac{1}{2}\right) u \dAu, \\
& = \intS  \left(|A|^2 + \frac{1}{2}\right) u \dAu.
\end{align}
Therefore, $\int_\Sigma |A|^2 u \dAu = 0$, and so $u$ is orthogonal to $|A|^2$. We then compute 
\begin{equation}
Q(1+u, 1+u) =  -\int_\Sigma \left((1/2)+|A|^2\right)\dAu - \int_\Sigma u^2 \dAu,
\end{equation}
where we have used the self-adjointness of $L$ and the fact that $u$ is orthogonal to $|A|^2$. This shows that $Q$ is negative definite on $\Span\{1, \la v, N\ra :v\in\mathbb R^{n+1}\}$.

\end{proof}

We now turn to the noncompact case. The argument is morally similar to the compact case, but we need cutoff functions to make the integrals work. For the proof of Lemma \ref{lm:compact}, the orthogonality of $|A|^2$ and $\la v, N \ra$ was critical. For the non-compact case, we don't have that $|A|^2$ and $\phi^2 \la v, N \ra$ are orthogonal. However, \eqref{eq:orthoA} gives us control on their product.

\begin{lemma}\label{lm:4_1}
For any functions $\phi \in C^\infty_0(\Sigma)$ and $f\in C^\infty(\Sigma)$ we have that\
\begin{equation}
\intS \phi f L(\phi f) \dAu = \intS \phi^2 f Lf \dAu - \intS |\nabla \phi|^2 f^2 \dAu. \label{eq:Lphif}
\end{equation}
Also, for any $\Sigma$ satisfying $H = \la x, N\ra / 2 + C$  and constant vector $v\in \mathbb R^{n+1}$ we have that
\begin{equation}
\intS \phi^2|A|^2\la v, N \ra \dAu = 2 \intS \phi A(\nabla \phi, v^T) \dAu \label{eq:orthoA},
\end{equation}
\end{lemma}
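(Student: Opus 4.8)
The plan is to prove the two identities separately, each by a direct integration-by-parts computation. For the first identity \eqref{eq:Lphif}, I would expand $L(\phi f)$ using the product rule. Since $L = \mathcal{L} + |A|^2 + \tfrac12$ and $\mathcal{L} = \triangle - \tfrac12 \nabla_{x^T}$ is a second-order operator with no zeroth-order term, the zeroth-order part of $L$ contributes $(\tfrac12 + |A|^2)\phi f$, which when multiplied by $\phi f$ and integrated simply produces the zeroth-order part of $\int \phi^2 f L f \dAu$. So the content is in the second-order part: I would compute $\mathcal{L}(\phi f) = \phi\, \mathcal{L} f + f\, \mathcal{L}\phi + 2\la \nabla\phi, \nabla f\ra$ (using $\triangle(\phi f) = \phi\triangle f + f\triangle\phi + 2\la\nabla\phi,\nabla f\ra$ and the first-order term splitting linearly). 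Then $\int \phi f\, \mathcal{L}(\phi f)\dAu$ expands into three pieces; the cross-term $\int \phi f (f\,\mathcal{L}\phi + 2\la\nabla\phi,\nabla f\ra)\dAu$ should be rearranged using the self-adjointness relation $\int \la\nabla g,\nabla h\ra\dAu = -\int g\,\mathcal{L}h\,\dAu$ stated in Section \ref{sc:gauss}. The cleanest route: apply that relation with $g = \phi^2 f$ and $h = f$ to get $\int \phi^2 f\,\mathcal{L}f\,\dAu = -\int \la\nabla(\phi^2 f),\nabla f\ra\dAu = -\int (\phi^2|\nabla f|^2 + 2\phi f\la\nabla\phi,\nabla f\ra)\dAu$, and apply it with $g = \phi^2 f^2$, $h$ absorbed appropriately, or more simply expand $\int\phi f\,\mathcal{L}(\phi f)\dAu = -\int|\nabla(\phi f)|^2\dAu = -\int(\phi^2|\nabla f|^2 + f^2|\nabla\phi|^2 + 2\phi f\la\nabla\phi,\nabla f\ra)\dAu$. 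Subtracting the two displays yields exactly $-\int|\nabla\phi|^2 f^2\dAu$, which gives \eqref{eq:Lphif}. This part is purely formal and self-adjointness does all the work.

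For the second identity \eqref{eq:orthoA}, the plan is to use $L\la v,N\ra = \tfrac12\la v,N\ra$ from \eqref{eq:81} together with the first identity. Writing $f = \la v,N\ra$, equation \eqref{eq:Lphif} gives $\int \phi f\, L(\phi f)\dAu = \tfrac12\int\phi^2 f^2\dAu - \int|\nabla\phi|^2 f^2\dAu$. On the other hand, I would compute $\int\phi f\, L(\phi f)\dAu$ directly by expanding $L(\phi f) = \mathcal{L}(\phi f) + (|A|^2 + \tfrac12)\phi f$ and integrating by parts the second-order term via $\int\phi f\,\mathcal{L}(\phi f)\dAu = -\int|\nabla(\phi f)|^2\dAu$. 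Comparing the two expressions for $\int\phi f\,L(\phi f)\dAu$ and cancelling the common $-\int|\nabla\phi|^2 f^2\dAu$ and $\tfrac12\int\phi^2 f^2\dAu$ terms should isolate $\int\phi^2|A|^2 f\dAu$ in terms of $\int\phi^2|\nabla f|^2 - |\nabla(\phi f)|^2 \dots$ type terms; this is not yet the claimed form. The genuinely new input needed is the pointwise identity $|\nabla\la v,N\ra|^2$-type data — really the fact that $\nabla_i\la v,N\ra = A(e_i, v^T)$ (from $\nabla_i N = A_{ij}e_j$ in our sign convention, so $\nabla\la v,N\ra = A(\cdot, v^T)$). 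Using this, $\la\nabla\phi, \nabla f\ra = A(\nabla\phi, v^T)$, and the cross term $2\int \phi f\la\nabla\phi,\nabla f\ra\dAu$ that appears is exactly $2\int\phi\la v,N\ra A(\nabla\phi,v^T)\dAu$; combined with the algebra above this should produce \eqref{eq:orthoA}.

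The cleanest organization is probably: first establish \eqref{eq:Lphif} formally; then for \eqref{eq:orthoA}, start from $\tfrac12\int\phi^2\la v,N\ra^2\dAu = \int\phi^2\la v,N\ra L\la v,N\ra\dAu$ (using \eqref{eq:81}), expand the right side using $L = \mathcal{L} + |A|^2 + \tfrac12$ to peel off $\tfrac12\int\phi^2\la v,N\ra^2\dAu + \int\phi^2|A|^2\la v,N\ra\dAu + \int\phi^2\la v,N\ra\mathcal{L}\la v,N\ra\dAu$, cancel the $\tfrac12$ terms, and rewrite $\int\phi^2\la v,N\ra\mathcal{L}\la v,N\ra\dAu = -\int\la\nabla(\phi^2\la v,N\ra),\nabla\la v,N\ra\rangle\dAu = -\int\phi^2|\nabla\la v,N\ra|^2\dAu - 2\int\phi\la v,N\ra\la\nabla\phi,\nabla\la v,N\ra\rangle\dAu$.

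The main obstacle I anticipate is the bookkeeping in the second identity: one must be careful that the $\int\phi^2|\nabla\la v,N\ra|^2\dAu$ term does \emph{not} survive in the final identity, so the argument cannot merely combine \eqref{eq:81} with integration by parts — it must be that $\int\phi^2\la v,N\ra\mathcal{L}\la v,N\ra\dAu$ when handled via self-adjointness in the \emph{other} direction (moving $\mathcal{L}$ onto the whole product rather than splitting) contributes differently. Concretely, the resolution is to not expand $\la\nabla(\phi^2\la v,N\ra),\nabla\la v,N\ra\rangle$ but instead to notice, using \eqref{eq:81} again, that $\int\phi^2\la v,N\ra\mathcal{L}\la v,N\ra\dAu = \int\phi^2\la v,N\ra(L - |A|^2 - \tfrac12)\la v,N\ra\dAu = -\int\phi^2|A|^2\la v,N\ra\dAu$, so the $|A|^2$ term appears with the opposite sign and one is left needing $\int\phi^2\la v,N\ra\mathcal{L}\la v,N\ra\dAu$ computed a \emph{second}, independent way — namely by moving only one derivative: $\int\phi^2\la v,N\ra\mathcal{L}\la v,N\ra\dAu = -\int\la\nabla(\phi^2\la v,N\ra),\nabla\la v,N\ra\rangle\dAu$. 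Equating the two evaluations of this integral, $-\int\phi^2|A|^2\la v,N\ra\dAu = -\int\phi^2|\nabla\la v,N\ra|^2\dAu - 2\int\phi\la v,N\ra A(\nabla\phi,v^T)\dAu$, does not obviously kill the gradient term either — so the final and essential point is the identity $\int\phi^2|\nabla\la v,N\ra|^2\dAu = -\int\phi^2|A|^2\la v,N\ra\dAu + 2\int\phi A(\nabla\phi,v^T)\dAu \cdot(\text{something})$, forcing me to instead derive \eqref{eq:orthoA} by the most direct path: integrate $\int\phi^2\la v,N\ra L\la v,N\ra \dAu$ two ways and track that the integration by parts of $\mathcal{L}$ across the single factor $\phi^2\la v, N\ra$ produces precisely $-\int \phi^2 |\nabla \la v,N\ra|^2 \dAu - 2\int \phi \la v, N\ra A(\nabla\phi, v^T)\dAu$, while the zeroth-order expansion produces $-\int \phi^2|A|^2\la v,N\ra\dAu$ after the $\tfrac12$-terms cancel; since $|\nabla\la v,N\ra|^2 = |A(\cdot,v^T)|^2$ need not vanish, the two gradient contributions must be reconciled by one further integration by parts using the Codazzi equation and $\nabla H$ — and \emph{that} Codazzi step (already hidden inside the proof of \eqref{eq:81}) is where the real content lies. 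I would therefore expect to invoke the same $\nabla H$ computation referenced in the proof of \eqref{eq:81} to convert $\int\phi^2 A(\cdot,v^T)$-type quadratic-in-$A$ integrals into the divergence term $2\int\phi A(\nabla\phi,v^T)\dAu$.
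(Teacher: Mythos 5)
Your treatment of the first identity \eqref{eq:Lphif} is correct and is essentially the paper's argument: expand $\mathcal L(\phi f)$ by the product rule and use the self-adjointness relation $\int_\Sigma \la \nabla g,\nabla h\ra \dAu = -\int_\Sigma g\,\mathcal L h \dAu$ to convert the cross and $f^2\phi\,\mathcal L\phi$ terms into $-\int_\Sigma |\nabla\phi|^2 f^2\dAu$. No issue there.

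For \eqref{eq:orthoA} there is a genuine gap, and you have correctly diagnosed it without resolving it. Every route you propose starts from the \emph{quadratic} pairing $\int_\Sigma \phi^2\la v,N\ra\, L\la v,N\ra\dAu$, and as you observe this unavoidably produces the term $\int_\Sigma \phi^2|\nabla\la v,N\ra|^2\dAu = \int_\Sigma\phi^2|A(\cdot,v^T)|^2\dAu$ together with a cross term carrying an extra factor of $\la v,N\ra$, namely $2\int_\Sigma\phi\la v,N\ra A(\nabla\phi,v^T)\dAu$ rather than $2\int_\Sigma\phi\,A(\nabla\phi,v^T)\dAu$. Neither of these matches \eqref{eq:orthoA}, and they do not cancel; your closing appeal to ``one further integration by parts using Codazzi and $\nabla H$'' is not a proof step but a placeholder for the missing idea. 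The fix is that \eqref{eq:orthoA} is \emph{linear} in $\la v,N\ra$, so one should test the eigenfunction equation against the cutoff alone: write
\begin{equation*}
\frac12\int_\Sigma \phi\,\la v,N\ra\dAu=\int_\Sigma \phi\, L\la v,N\ra\dAu=\int_\Sigma\Bigl(\tfrac12+|A|^2\Bigr)\phi\,\la v,N\ra\dAu-\int_\Sigma\la\nabla\phi,\nabla\la v,N\ra\ra\dAu .
\end{equation*}
The $\tfrac12$ terms cancel, and $\nabla_{e_i}\la v,N\ra=A(e_i,v^T)$ gives $\int_\Sigma\phi\,|A|^2\la v,N\ra\dAu=\int_\Sigma A(\nabla\phi,v^T)\dAu$; replacing $\phi$ by $\phi^2$ yields \eqref{eq:orthoA}. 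Because only one factor of $\la v,N\ra$ is present, no $|\nabla\la v,N\ra|^2$ term ever arises. Note also that \eqref{eq:Lphif} is not actually needed for this part.
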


\begin{proof}
\begin{align}
\intS \phi f L(\phi f) \dAu & = \intS \left(f^2\phi \mathcal L \phi + \frac{1}{2}\la \nabla \phi^2, \nabla f^2 \ra+ \phi^2 f L f\right)  \dAu \notag,\\
& = \intS (f^2\phi \mathcal L \phi  - f^2 \phi \mathcal L \phi - f^2 |\nabla \phi|^2 + \phi^2 fLf) \dAu \notag,\\
& = \intS(\phi^2 fLf - |\nabla \phi|^2 f^2)\dAu \label{eq:810}.
\end{align}
This shows \eqref{eq:Lphif}.

To prove \eqref{eq:orthoA}, consider a $\Sigma$ satisfying $H = (1/2)\la x, N\ra + C$. Using $L\la v, N \ra = (1/2)\la v, N\ra$ and \eqref{eq:Lphif} we have that
\begin{align}
\frac{1}{2} \intS \phi \la v, N \ra \dAu & = \intS \phi L \la v, N \ra \dAu \notag, \\
& = \intS \left(\frac{1}{2} + |A|^2\right) \phi \la v, N \ra \dAu - \intS \la \nabla \phi, \nabla \la v, N \ra \ra \dAu \label{eq:811}.
\end{align}
Therefore, we get that
\begin{equation}
\intS |A|^2 \phi \la v, N \ra \dAu = \intS A(\nabla \phi, v^T) \dAu \label{eq:812}.
\end{equation}
Substituting $\phi \to \phi^2$, we get \eqref{eq:orthoA}.

\end{proof}



\begin{lemma}\label{lem:phiV}

Let $\Sigma$ be a hyper-surface such that $H = (1/2)\la x, N\ra + C$ and $\Au(\Sigma)<\infty$. Then there exists $\phi \in C_0^\infty(\Sigma)$ such that $Q$ is negative definite on $\phi V$ and $\Dim (\phi V) = \Dim V$, where $$V \equiv \Span\{1, \la v, N\ra : v \in \mathbb{R}^{n+1}\}.$$

\end{lemma}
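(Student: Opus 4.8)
Here is my plan for proving Lemma \ref{lem:phiV}.

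\textbf{Overall strategy.} The statement asserts that a single cutoff function $\phi$ can be chosen so that, when every element of the finite-dimensional space $V = \Span\{1, \la v, N\ra : v\in\mathbb R^{n+1}\}$ is multiplied by $\phi$, the resulting space $\phi V$ still has full dimension and $Q$ is negative definite on it. Since $V$ is spanned by $1$ together with the functions $\la v,N\ra$, it is at most $(n+2)$-dimensional, and in particular finite-dimensional; this finiteness is what makes a uniform choice of $\phi$ possible. The plan is to take $\phi = \phi_r$ to be a standard logarithmic (or linear) cutoff equal to $1$ on $B_r(0)\cap\Sigma$ and supported in $B_{2r}(0)\cap\Sigma$, with $|\nabla\phi_r|^2$ small in an integrated sense, and then show that for $r$ large enough $Q$ is negative definite on $\phi_r V$.

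\textbf{Key steps.} First I would record that, because $\Au(\Sigma)<\infty$ and the weight $\eterm$ decays, one has $\intS |A|^2\dAu<\infty$ and $\intS \la v,N\ra^2\dAu<\infty$ and $\intS |v^T|^2 \dAu < \infty$ for each fixed $v$ (the last since $|v^T|\le|v|$); this lets all the integrals below converge. Second, using Lemma \ref{lm:4_1}, for $w = a\cdot 1 + \la v, N\ra \in V$ I would expand $Q(\phi w, \phi w)$ via \eqref{eq:Lphif}: write $Lw = L(a + \la v,N\ra) = (a/2) + (|A|^2 + 1/2)a \cdot 0 \dots$ — more precisely $L(1) = |A|^2 + 1/2$ and $L\la v,N\ra = \tfrac12\la v,N\ra$ by \eqref{eq:81} — so that $\phi^2 w L w$ is explicit, and \eqref{eq:orthoA} controls the cross term $\intS \phi^2 |A|^2 \la v,N\ra\dAu = 2\intS \phi A(\nabla\phi, v^T)\dAu$, which is $O(\|\nabla\phi\|_{L^2})$ by Cauchy–Schwarz and $|A|,|v^T|\in L^2(\dAu)$. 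The upshot is
\begin{equation}
Q(\phi w,\phi w) = -\intS \phi^2\left(\tfrac12 w^2 + a^2|A|^2\right)\dAu + \mathrm{Err}(\phi),\notag
\end{equation}
where $\mathrm{Err}(\phi)\to 0$ as $r\to\infty$ uniformly for $w$ ranging over the unit sphere of $V$ in a fixed norm (say the $L^2(\dAu)$ norm on $\Sigma$, which on the finite-dimensional $V$ is equivalent to the coefficient norm $(a,v)$). Third, since $\intS\phi_r^2(\tfrac12 w^2 + a^2|A|^2)\dAu \to \intS(\tfrac12 w^2 + a^2|A|^2)\dAu$ by monotone convergence, and this limit is a positive-definite quadratic form in $(a,v)$ on $V$ (it vanishes only if $w\equiv 0$), I get a uniform lower bound: there is $\delta>0$ with $Q(\phi_r w,\phi_r w) \le -\delta\|w\|^2$ for all $w\in V$ once $r$ is large. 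In particular $\phi_r w = 0$ forces $w=0$, so $\Dim(\phi_r V) = \Dim V$, and $Q$ is negative definite on $\phi_r V$.

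\textbf{Main obstacle.} The delicate point is the uniformity of the error term over all of $V$ simultaneously, together with making sure the ``energy'' quadratic form $\intS(\tfrac12 w^2 + a^2|A|^2)\dAu$ is genuinely positive definite on $V$ — i.e. that no nonzero $w = a + \la v,N\ra$ vanishes identically on $\Sigma$ while also having $a=0$. If $a\ne 0$ the $\tfrac12 w^2$ term already gives positivity; if $a=0$ then $w = \la v,N\ra$ and $w\equiv 0$ exactly means $v$ is tangent to every point of $\Sigma$, which is permitted (it is precisely the linear directions that will later be split off), but in that case $w$ is the zero element of $V$ anyway, so there is no contradiction: the quadratic form is positive definite on $V$ as a space of functions. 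I would also need the finite-dimensionality of $V$ to pass from pointwise-in-$w$ convergence of $\mathrm{Err}$ to uniform convergence on the unit sphere, which is automatic by compactness once each coordinate error tends to zero. Handling the cross terms between $1$ and $\la v,N\ra$ and between different $\la v,N\ra$ carefully — all of which are again $O(\|\nabla\phi\|_{L^2})$ or converge to their natural limits — is routine given \eqref{eq:Lphif} and \eqref{eq:orthoA}, so I expect no essential difficulty beyond organizing the estimate.
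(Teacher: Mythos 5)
Your overall strategy --- expand $Q(\phi w,\phi w)$ via \eqref{eq:Lphif} and \eqref{eq:orthoA}, use a linear cutoff supported in an annulus, and get uniformity over the unit sphere of the finite-dimensional $V$ by compactness --- is exactly the paper's. There is, however, one genuine gap: you assert that $\Au(\Sigma)<\infty$ implies $\intS |A|^2\dAu<\infty$, and you rely on this to conclude that the cross term $a\intS\phi^2|A|^2\la v,N\ra\dAu = 2a\intS\phi A(\nabla\phi,v^T)\dAu$ is $O(\|\nabla\phi\|_{L^2})$ and hence part of an error tending to zero. Finiteness of the weighted area does give $\intS\la v,N\ra^2\dAu<\infty$ and $\intS|v^T|^2\dAu<\infty$, since those integrands are bounded by $|v|^2$, but it gives no control on $\intS|A|^2\dAu$; no such bound is available under the hypotheses of the lemma. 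The Cauchy--Schwarz estimate $2\left|\intS\phi A(\nabla\phi,v^T)\dAu\right|\le 2\left(\intS\phi^2|A|^2|v^T|^2\dAu\right)^{1/2}\left(\intS|\nabla\phi|^2\dAu\right)^{1/2}$ then need not tend to zero, because the first factor may blow up as the cutoff radius grows even though the second decays.

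The paper's proof sidesteps this by never detaching the cross term from the good term: applying the pointwise inequality $2xy\le x^2+y^2$ to \eqref{eq:orthoA} yields
\begin{equation}
\left|\intS\phi^2|A|^2 c_0\la v,N\ra\dAu\right| \le \intS\phi^2|A|^2c_0^2\dAu + \intS|\nabla\phi|^2|v^T|^2\dAu,
\end{equation}
and the first term on the right is absorbed exactly by the negative term $-\intS\phi^2|A|^2c_0^2\dAu$ already present in \eqref{eq:4_13}. One gives up the $|A|^2$ contribution entirely and keeps only $-\tfrac12\intS\phi^2u^2\dAu$ plus the error $\intS|\nabla\phi|^2(u^2+|v^T|^2)\dAu$, which genuinely tends to zero since $u^2+|v^T|^2$ is bounded and $\Au(\Sigma)<\infty$. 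You already carry the term $-a^2\intS\phi^2|A|^2\dAu$ in your expansion, so the repair is small, but as written your error estimate does not close.
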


\begin{remark}
The vector space $V$ consists of all functions that are spanned by the constant function and those functions that correspond to translating the hypersurface. It is not true that all functions in $V$ represent variations preserving $\Vu$.
\end{remark}

\begin{proof}

Consider $u \equiv c_0 + \la v, N \ra$, and consider $Q(u\phi, u\phi)$.
From \eqref{eq:Lphif} we have that
\begin{align}
Q(\phi u, \phi u) & = -\intS \phi^2 u L u \dAu + \intS |\nabla \phi|^2 u^2 \dAu \label{eq:813},\\
& = -\intS \phi^2 u c_0 (1/2 + |A|^2) \dAu - \intS \phi^2 u (1/2) \la v, N \ra \dAu + \intS |\nabla \phi|^2 u^2 \dAu.
\end{align}
Using that $u = c_0 + \la v, N \ra$, we get
\begin{equation}\label{eq:4_13}
Q(\phi u, \phi u)  = -(1/2)\intS \phi^2 u^2 \dAu - \intS \phi^2 |A|^2 c_0^2 \dAu - \intS \phi^2 |A|^2 c_0 \la v, N \ra \dAu + \intS |\nabla\phi|^2 u^2 \dAu.
\end{equation}
Now, using \eqref{eq:orthoA} and a Cauchy-Schwarz inequality of the form $2ab \leq a^2 + b^2$, we get
\begin{align}
\left|\intS \phi^2 |A|^2 c_0 \la v, N \ra\dAu\right| & = 2\left| \intS \phi A(\nabla \phi, v^T)c_0 \dAu \right |,\notag\\
& \leq \intS \phi^2 |A|^2 c_0^2 \dAu + \intS |\nabla \phi|^2|v^T|^2\dAu.\label{eq:814}
\end{align}
Therefore,
\begin{equation}
Q(\phi u, \phi u) \leq -(1/2)\intS\phi^2 u^2 \dAu + \intS |\nabla\phi|^2 (u^2 + |v^T|^2) \dAu \label{eq:815}
\end{equation}

Now fix a point $p\in\Sigma$ and let $r$ be the Euclidean distance from the origin. For $R>0$ large, define the cut-off function $\phi_R$ such that
\begin{equation}
\phi_R(r) =
\begin{cases}
1 & r\leq R \\
1 - (1/R)(r-R) & R \leq r \leq 2R \\
0 & r\geq 2R.
\end{cases}\label{eq:816}
\end{equation}
and observe that $|\nabla \phi_R| \leq 1/R$. Also note that $\phi_R \in C^\infty_0(\Sigma)$ since $\Sigma$ is proper.

Now, note that \eqref{eq:815} becomes
\begin{equation}\label{eq:4_17}
Q(\phi_R u, \phi_R u) \leq -\frac{1}{2} \intS \phi_R^2 u^2 \dAu + \frac{2}{R^2}\int\limits_{\Sigma \setminus B_R(0)} u^2 \dAu.
\end{equation}
Since $\Au(\Sigma)<\infty$, we have that $\int_\Sigma u^2 \dAu < \infty$ for any $u \in V$. For fixed $u \neq 0$ and taking $R\to \infty$, we see that there exists $R_u$ such that $Q(\phi_{R_u} u , \phi_{R_u} u) < 0$. We wish to show that we can find such an $R$ that is independent of $u \in V$.

Now, note that the dimension of $V$ is not necessarily $n+1$. Let $\{ c_i + \la v_i, N \ra \}$ be a basis for $V$ with $|c_i|^2 + |v_i|^2 = 1$. Define $S \equiv \{ d^i (c_i + \la v_i, N \ra) : \sum d_i^2 = 1\}$. Since $\Dim V < \infty$, we have that $S$ is compact. This implies there exists $R_0$ such that for all $u\in S$ we have that $B_{R_0} \cap \{ u \neq 0 \} \neq \emptyset$; if not, there would exist a sequence of $ u_j = d^i_j (c_i + \la v_i, N \ra)\in S$ such that $u_j \equiv 0$ on $B_j$. By passing to a subsequence and taking a limit $d^i_j \to d^i_\infty$, this implies that $d^i_\infty (c_i + \la v_i, N \ra) \equiv 0 \in S$ which is a contradiction.

Therefore, for $R \geq R_0$ and all $u \in S$ we have that
\begin{equation}
\intS \phi_R^2 u^2 \dAu \geq M_R > 0 \label{eq:817}.
\end{equation}
Note that $M_R$ is increasing in $R$, and that $\Dim( \phi_R V) = \Dim V$. Also note that since $S$ is compact, we may find $D_S$ such that $\int_\Sigma u^2 \dAu < D_S$ for all $u\in S$. Therefore, \eqref{eq:4_17} becomes
\begin{equation}
Q(\phi_R u, \phi_R u) \leq -\frac{M_R}{2} + \frac{2 D_S}{R^2},
\end{equation}
for all $u\in S$ and $R\geq R_0$. Taking $R\to\infty$, we may find $R$ independent of $u$ such that $Q(\phi_R u, \phi_R u) < 0$ for all $u\in S$. So we have that $\Dim V = \Dim(\phi_R V)$ and that $Q$ is negative definite on $\phi_R V$.

\end{proof}


Now, we use the space $\phi V$ from Lemma \ref{lem:phiV} and dimension counting to show that bounds on $\Ind Q$ forces $\Sigma$ to split off a linear space.

\begin{theorem} \label{th:index}

Consider any two-sided, smooth, properly immersed, non-planar hyper-surface $\Sigma\subset\mathbb R^{n+1}$ such that $\Au(\Sigma)<\infty$, $\Sigma$ satisfies the mean curvature condition $H = (1/2)\la x, N \ra + C$, and $\Ind Q \leq n$. Then there exists an $i$ such that $ n+1-\Ind Q \leq i \leq n$, and we have that
\begin{equation}
\Sigma = \Sigma_0 \times \mathbb R^{i}.
\end{equation}
Furthermore, for such non-planar $\Sigma$ it is impossible that $\Ind Q = 0$ or $\Ind Q=1$.
\end{theorem}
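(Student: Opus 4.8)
The plan is to run a dimension count on top of Lemma~\ref{lem:phiV}, which already supplies a fixed cut-off $\phi\in C_0^\infty(\Sigma)$ and a finite-dimensional space $\phi V$ on which $Q$ is negative definite, with $\Dim(\phi V)=\Dim V$ and $V=\Span\{1,\la v,N\ra:v\in\mathbb R^{n+1}\}$. Set
\[
P\equiv\{v\in\mathbb R^{n+1}:\la v,N\ra\equiv 0\text{ on }\Sigma\},\qquad i\equiv\Dim P .
\]
The first step is to express $\Dim V$ in terms of $i$. The linear map $v\mapsto\la v,N\ra$ from $\mathbb R^{n+1}$ to functions on $\Sigma$ has kernel exactly $P$, so $\Dim\{\la v,N\ra:v\}=n+1-i$. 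I claim $1$ is not in this span when $\Sigma$ is non-planar: if $1=\la v_0,N\ra$ for some constant $v_0$, then applying $L$ and using $L1=|A|^2+\tfrac12$ (since $1$ is constant) together with $L\la v_0,N\ra=\tfrac12\la v_0,N\ra=\tfrac12$ from \eqref{eq:81} forces $|A|^2\equiv 0$, so $\Sigma$ is totally geodesic, hence a union of hyperplanes, contradicting non-planarity. Therefore $\Dim V=n+2-i$, and likewise $\Dim(\phi V)=n+2-i$.

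Next I restrict to variations orthogonal to the constants. The set $\{u\in C_0^\infty(\Sigma):\int_\Sigma u\dAu=0\}$ is the kernel of a single linear functional, so $W\equiv\phi V\cap\{u:\int_\Sigma u\dAu=0\}$ has $\Dim W\geq\Dim(\phi V)-1=n+1-i$, and $Q$ is negative definite on $W$ because it is negative definite on all of $\phi V$. By definition of $\Ind Q$ this yields
\[
\Ind Q\ \geq\ n+1-i,\qquad\text{equivalently}\qquad i\ \geq\ n+1-\Ind Q .
\]
Combined with the hypothesis $\Ind Q\leq n$ we get $i\geq 1$, and $i\leq n$ holds trivially because $N$ cannot vanish identically (so $P\neq\mathbb R^{n+1}$); this is the claimed range $n+1-\Ind Q\leq i\leq n$.

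It remains to produce the splitting and to exclude $\Ind Q\in\{0,1\}$. For the splitting, choose an orthonormal basis $v_1,\dots,v_i$ of $P$ and coordinates $\mathbb R^{n+1}=\mathbb R^{n+1-i}\times\mathbb R^i$ with $\mathbb R^i=\Span\{v_1,\dots,v_i\}$. Each constant vector field $v_j$ is everywhere tangent to $\Sigma$ since its normal component $\la v_j,N\ra$ vanishes, so the translation flow $x\mapsto x+tv_j$ preserves $\Sigma$; these flows commute, so $\Sigma$ is invariant under the whole $\mathbb R^i$-action, and the standard structure argument for translation-invariant hypersurfaces upgrades this to $\Sigma=\Sigma_0\times\mathbb R^i$ with $\Sigma_0\subset\mathbb R^{n+1-i}$. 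Finally, for non-planar $\Sigma$ one has $i\leq n-1$: if $i=n$ then $N$ lies in the one-dimensional space $P^\perp$, hence is locally constant, each component is totally geodesic, and $\Sigma$ is planar. Substituting $i\leq n-1$ into $\Ind Q\geq n+1-i$ gives $\Ind Q\geq 2$, so $\Ind Q$ can be neither $0$ nor $1$. I expect the only genuinely delicate point to be the passage from translation invariance to the literal product decomposition $\Sigma=\Sigma_0\times\mathbb R^i$ for a possibly disconnected, merely immersed hypersurface; the rest is the short identity computation showing $1\notin\Span\{\la v,N\ra\}$ and linear-algebra bookkeeping layered on Lemma~\ref{lem:phiV}.
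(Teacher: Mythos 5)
Your proposal is correct and follows essentially the same route as the paper: the identity $L\la v,N\ra=\tfrac12\la v,N\ra$, the observation that no nonzero constant lies in $\Span\{\la v,N\ra\}$ for non-planar $\Sigma$, Lemma~\ref{lem:phiV} to get the negative-definite space $\phi V$, and a codimension-one dimension count on $\phi V\cap 1^\perp$ to bound $\Dim\{v:\la v,N\ra\equiv 0\}$ from below. If anything, you are more explicit than the paper on the endpoints of the range (ruling out $i=n$ for non-planar $\Sigma$ to get $\Ind Q\geq 2$) and on the passage from translation invariance to the product splitting, both of which the paper leaves implicit.
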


\begin{proof}
Let $V \equiv \Span\{1, \la v, N \ra\}_{v\in \mathbb R^{n+1}}$. First, we comment on $\Dim V$. Consider the case that the constant function $c_0 \in \Span\{\la v, N \ra\}_{v\in\mathbb R^{n+1}}$. We have that $L c_0 = (1/2) c_0$, but $c_0$ is also constant, so $L c_0 = (1/2+|A|^2) c_0$. Therefore, $|A|^2 c_0 \equiv 0$, and since $\Sigma$ is non-planar, we have that $c_0 = 0$. Hence,
\begin{equation}
\Dim V = 1 + \Dim\Span \{\la v, N \ra\}_{v\in \mathbb R^{n+1}}\label{eq:820}.
\end{equation}
By Lemma \ref{lem:phiV}, we have for some $\phi \in C^\infty_0(\Sigma)$ that $\Dim \phi V = \Dim V$ and that $Q$ is negative definite on $\phi V$. 

Remember that for $\Vu$ preserving variations we restrict to functions $\{u\in C^\infty_0 : \int_\Sigma u \dAu = 0\}$. So, we need to consider the space $\phi V \cap 1^\perp$. Now, note that by counting dimensions, we have $\Dim (\phi V \cap 1^\perp) \geq \Dim\Span \{\la v, N \ra\}_{v\in \mathbb R^{n+1}}.$ Hence,  $\Dim\Span \{\la v, N \ra\}_{v\in \mathbb R^{n+1}} \leq \Ind Q$. Considering the kernel of the linear transformation $\mathbb R^{n+1} \to C^\infty(\Sigma)$ given by $v \to \la v, N \ra$, we have that
\begin{align}
\Dim\{v: \la v, N \ra \equiv 0\} & = n+1 - \Dim\Span\{\la v, N \ra\}_{v\in \mathbb R^{n+1}}\\
 & \geq n+1 - \Ind Q.
\end{align}
 Finally, note that
\begin{equation}
\Sigma = \Sigma_0 \times \{v : \la v, N \ra \equiv 0\} \label{eq:822}.
\end{equation}
\end{proof}

\begin{remark}
Note that for the case of $\Sigma = S^n_R \subset \mathbb R^{n+1}$, one has that the eigenvalues of $\triangle_{ S_R^n}$ are given by $k(k+n-1)/R^2$ for $k=0,1,2,...$, and each eigenspace is given by the restriction of harmonic polynomials in $x_1,...,x_{n+1}$ that are homogeneous of degree $k$.

Therefore, for $\Sigma = S^n_R$, we have that $L$ has eigenvalues 
\begin{equation}
\lambda_k = \frac{1}{R^2}\left(k(k+n-1) - n\right) - \frac{1}{2}.
\end{equation}
The lowest eigenspace is given by the constant functions, so all other eigenspaces represent variations preserving $\Vu$.

The next eigenspace, for $\lambda_1 = -1/2$, is given by the functions $\{\la v, N\ra : v \in\mathbb R^{n+1}\}$. Note that its dimension is $n+1$.

The next eigenvalue is $\lambda_2 = (n+2)/R^2 - (1/2) $. So we see that for $R^2 < 2n + 4$, we have that $\Sigma = S^n_R$ is an example of a hyper-surface satisfying $H = (1/2)\la x, N \ra + C$ for some $C$, $\Ind Q = n+1$, and $\Sigma$ does not split off a linear space. Therefore, the index bound in Theorem \ref{th:index} is sharp.
\end{remark}

\begin{corollary}\label{co:gisop}
The hyper-planes are the only two-sided, smooth, complete, properly immersed hypersurfaces $\Sigma\subset\mathbb R^{n+1}$ such that $\Au(\Sigma)<\infty$, $\Sigma$ satisfies $H = (1/2)\la x, N\ra + C$ for some constant $C$, and $\Sigma$ satisfies the locally stable condition \eqref{eq:minimizing}. 

Furthermore, there are no two-sided, smooth, complete, properly immersed $\Sigma$ such that $\Au(\Sigma)<\infty$, $\Sigma$ satisfies $H = (1/2)\la x, N \ra + C$, and $\Ind Q = 1$.  
\end{corollary}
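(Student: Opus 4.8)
The plan is to deduce Corollary \ref{co:gisop} directly from Theorem \ref{th:index} together with the stability of hyperplanes established in Section \ref{sc:planes}. The one bookkeeping point is that the locally stable condition \eqref{eq:minimizing} says precisely that $Q(u,u) \ge 0$ for every $u \in C^\infty_0(\Sigma)$ with $\int_\Sigma u \dAu = 0$; by the definition of $\Ind Q$ this is exactly the statement $\Ind Q = 0$. With this dictionary in hand, both assertions follow by a short dimension/index argument.

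For the first assertion, I would take a two-sided, smooth, complete, properly immersed $\Sigma$ with $\Au(\Sigma) < \infty$ satisfying $H = (1/2)\la x, N\ra + C$ and the stability condition, so that $\Ind Q = 0 \le n$. If $\Sigma$ were non-planar, Theorem \ref{th:index} would give $\Sigma = \Sigma_0 \times \mathbb{R}^i$ with $n+1-\Ind Q \le i \le n$, i.e. $n+1 \le i \le n$, which is impossible; equivalently, Theorem \ref{th:index} explicitly rules out $\Ind Q = 0$ for non-planar $\Sigma$. Hence $\Sigma$ is planar, i.e. a hyperplane. Conversely, Section \ref{sc:planes} shows that every hyperplane is $\Vu$ preserving stable, so the hyperplanes are exactly the stable solutions.

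For the second assertion, I would argue by contradiction: suppose $\Sigma$ satisfies the hypotheses with $\Ind Q = 1$. Since $n \ge 1$ we have $\Ind Q = 1 \le n$. If $\Sigma$ is non-planar, Theorem \ref{th:index} applies and directly excludes $\Ind Q = 1$. If $\Sigma$ is planar, it is a hyperplane, and the spectral description of $L$ in Section \ref{sc:planes} --- lowest eigenvalue $-1/2$ with eigenspace the constants, all further eigenvalues nonnegative --- shows $Q \ge 0$ on $1^\perp$, so $\Ind Q = 0 \ne 1$. In either case we reach a contradiction, so no such $\Sigma$ exists.

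I do not expect a real obstacle, since the corollary is a formal consequence of earlier results; the only items needing (routine) attention are the identification of \eqref{eq:minimizing} with ``$\Ind Q = 0$'', the observation that $\Ind Q \in \{0,1\}$ meets the hypothesis $\Ind Q \le n$ of Theorem \ref{th:index} for every $n \ge 1$, and the disposal of the planar case via the spectral analysis of Section \ref{sc:planes} (using the change of variables there to reduce to a hyperplane through the origin).
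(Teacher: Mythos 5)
Your proposal is correct and matches the paper's intended argument: the corollary is stated as an immediate consequence of Theorem \ref{th:index} (which explicitly rules out $\Ind Q = 0$ and $\Ind Q = 1$ for non-planar $\Sigma$) together with the stability of hyperplanes from Section \ref{sc:planes}, and your identification of condition \eqref{eq:minimizing} with $\Ind Q = 0$ is exactly the bookkeeping the paper relies on.
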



\section{An Integral Curvature Estimate} \label{sc:intest}

Using stability inequalities to obtain integral estimates for $|A|$ and then turning these estimates into pointwise estimates for $|A|$ is a long established technique in geometric analysis, see Schoen-Simon-Yau \cite{SchoenSimonYau}. In this Section and in Section \ref{sc:ptest}, we use this technique to get estimates on $|A|$. However, the one complication that we must deal with is that we can not put any test function $u\in C^\infty_0(\Sigma)$ into the stability condition \eqref{eq:minimizing}.

Since, the functions $\la v,N \ra$ for $v \in \mathbb R^{n+1}$ play a key role in the proof of theorem \ref{th:index}, it is not surprising that they play a key role in creating an integral estimate for the non-complete case. We will use these functions with appropriate cut-off functions to prove our estimate.

First, we need some notation. For two-sided $\Sigma$ and any $\phi \in C^\infty_0(\Sigma)$ such that $\phi \geq 0$ and $\phi \not\equiv 0$, let
\begin{equation}
N_\phi \equiv \frac{\intS \phi N \dAu}{\intS \phi\dAu} \label{eq:91}.
\end{equation}
In the case that $\phi\equiv 0$, we may define $N_\phi = 0$.

We find a modified version of the stability condition \eqref{eq:minimizing} that is valid for any $\phi \in C^\infty_0(\Sigma)$ such that $\phi\geq 0$. That is, we don't require that $\int_\Sigma \phi \dAu = 0$. This will allow us to use more standard cut-off functions $\phi \in C^\infty_0(\Sigma)$. One should compare \eqref{eq:92} to the stability inequality for minimal hyper-surfaces in Euclidean space: $\int_\Sigma \phi^2 |A|^2 \dA \leq \int_\Sigma |\nabla \phi|^2 \dA.$

\begin{lemma}\label{lm:modstability}
Let $\Sigma \subset \mathbb R^{n+1}$ be a two-sided, smooth, immersed hyper-surface satisfying the mean curvature condition $H = (1/2)\la x, N \ra + C$ and satisfying the stability condition \eqref{eq:minimizing}. Let $\phi \in C^\infty_0(\Sigma)$ such that $\phi \geq 0$ and $\phi \not \equiv 0$. Then, we have that
\begin{equation}
\intS \phi^2 |N - N_\phi|^2 \dAu + |N_\phi|^2 \intS \phi^2 |A|^2 \dAu \leq B_n \intS |\nabla \phi |^2 \dAu\label{eq:92}.
\end{equation}
Here $B_n$ is a constant depending on $n$.
\end{lemma}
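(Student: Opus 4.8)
The plan is to apply the stability condition \eqref{eq:minimizing} to a family of test functions of the form $u_v = \phi(\la v, N\ra - c_v)$, where $v$ ranges over $\mathbb{R}^{n+1}$ and $c_v$ is the constant chosen so that $\int_\Sigma u_v \dAu = 0$; this is exactly the device that forces $u_v$ into the admissible subspace on which $Q \geq 0$. Concretely, $c_v = \la v, N_\phi\ra / (\text{something})$ — more precisely one checks $\int_\Sigma \phi(\la v,N\ra - c_v)\dAu = 0$ forces $c_v \intS \phi \dAu = \la v, \intS \phi N \dAu\ra$, i.e. $c_v = \la v, N_\phi\ra$ once we normalize, so $u_v = \phi\la v, N - N_\phi\ra$. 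Then $Q(u_v, u_v) \geq 0$, and I would expand this using \eqref{eq:Lphif} from Lemma \ref{lm:4_1} together with $L\la v, N\ra = (1/2)\la v,N\ra$.

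The key steps, in order: (1) Fix $\phi \geq 0$, $\phi \not\equiv 0$; for a constant vector $v$ set $u_v = \phi\la v, N - N_\phi\ra$ and verify $\int_\Sigma u_v \dAu = 0$ directly from the definition \eqref{eq:91} of $N_\phi$. (2) Compute $Q(u_v, u_v)$: writing $f = \la v, N\ra - \la v, N_\phi\ra$, apply \eqref{eq:Lphif} to get $\intS \phi f L(\phi f)\dAu = \intS \phi^2 f Lf\dAu - \intS|\nabla\phi|^2 f^2\dAu$, and since $L$ annihilates constants up to the $(1/2 + |A|^2)$ term, $Lf = (1/2)\la v,N\ra + |A|^2\la v,N_\phi\ra \cdot(\text{correction})$ — careful bookkeeping gives $\phi^2 f Lf = (1/2)\phi^2 f^2 + (1/2)\phi^2 f \la v, N_\phi\ra + \phi^2|A|^2(\dots)$, and I'd use \eqref{eq:orthoA} to replace the $\phi^2|A|^2\la v, N\ra$ piece by $2\phi A(\nabla\phi, v^T)$. (3) Sum (or average) over an orthonormal basis $v = e_1, \dots, e_{n+1}$ of $\mathbb{R}^{n+1}$: $\sum_k \la e_k, N - N_\phi\ra^2 = |N - N_\phi|^2 = 1 - |N_\phi|^2$ (using $|N| = 1$), $\sum_k \la e_k, N_\phi\ra^2 = |N_\phi|^2$, and $\sum_k |e_k^T|^2 = |x^T\text{-type sum}| = n$ (the tangential projection of an orthonormal frame has squared-norm $n$ since $N$ is a unit normal). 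This collapses the family of inequalities into a single one. (4) Absorb the cross term $\sum_k 2\phi A(\nabla\phi, e_k^T)\la e_k, N_\phi\ra$ via Cauchy–Schwarz $2ab \leq \delta a^2 + \delta^{-1}b^2$, trading a controlled fraction of $|N_\phi|^2 \intS\phi^2|A|^2\dAu$ against $\intS|\nabla\phi|^2\dAu$, and rearrange, collecting all gradient terms into the constant $B_n$.

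The main obstacle I anticipate is step (3)–(4): the bookkeeping of exactly which terms carry the factor $|N_\phi|^2$ versus $|N - N_\phi|^2$, and making sure the Cauchy–Schwarz absorption leaves a genuinely positive coefficient (not zero) in front of both $\intS\phi^2|N - N_\phi|^2\dAu$ and $|N_\phi|^2\intS\phi^2|A|^2\dAu$ simultaneously. The subtlety is that $|A|^2$ appears both from the $L$ operator acting on $\la v, N\ra$ and from \eqref{eq:orthoA}, and one must check these combine with the right sign rather than cancel; the $1/2$ from the lowest-eigenvalue term of $L$ is what ultimately survives to give the $\intS\phi^2|N - N_\phi|^2\dAu$ term a positive coefficient. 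A secondary point of care is the degenerate possibility $N_\phi = 0$, where the second term on the left of \eqref{eq:92} drops out but the inequality must still hold — this is automatic since that term is then simply absent and the remaining inequality is weaker.
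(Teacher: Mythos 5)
Your proposal is correct and follows essentially the same route as the paper: the same test functions $\phi\la v, N-N_\phi\ra$ (admissible by the definition of $N_\phi$), the same use of \eqref{eq:Lphif}, $L\la v,N\ra = \tfrac12\la v,N\ra$, and \eqref{eq:orthoA}, the same Cauchy--Schwarz absorption leaving a positive coefficient on $\la v,N_\phi\ra^2\intS\phi^2|A|^2\dAu$, and the same summation over a constant orthonormal frame. The only slip is the parenthetical pointwise identity $|N-N_\phi|^2 = 1-|N_\phi|^2$ (the correct expansion is $1 - 2\la N,N_\phi\ra + |N_\phi|^2$), but this is never used since \eqref{eq:92} keeps $|N-N_\phi|^2$ as is.
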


\begin{proof}
Let $v \in \mathbb R^{n+1}$ such that $|v| = 1$, and note that $\int\nolimits_\Sigma \phi \la v, N - N_\phi \ra  \du = 0$. Therefore, we may plug $u = \phi \la v, N - N_\phi \ra$ into the stability condition \eqref{eq:minimizing}. A calculation similar to the proof of equation \eqref{eq:4_13} gives us that

\begin{equation}
\frac{1}{2} \intS \phi^2 \la v, N-N_\phi \ra^2 \dAu \leq \intS \phi^2 |A|^2 \la v, N_\phi \ra \la v, N - N_\phi \ra \dAu + \intS 4 |\nabla \phi|^2 \dAu\label{eq:94}.
\end{equation}

Applying a Cauchy inequality of the form $2ab \leq (1/2)a^2 + 2b^2$ to \eqref{eq:orthoA}, we get
\begin{equation}
\la v, N_\phi \ra \intS |A|^2 \phi^2 \la v, N \ra \dAu \leq \frac{1}{2} \intS \phi^2 \la v, N_\phi \ra ^2 |A|^2 \dAu + 2 \intS |\nabla \phi| ^2 |v^T|^2 \dAu\label{eq:95}.
\end{equation}

Combining \eqref{eq:94} and \eqref{eq:95} gives us
\begin{equation}
\intS \phi^2 \la v, N - N_\phi \ra ^2 \dAu +\la v, N_\phi\ra^2 \intS \phi^2 |A|^2 \dAu \leq 12 \intS |\nabla \phi|^2 \dAu \label{eq:96}.
\end{equation}

We sum over a constant orthonormal frame for $\mathbb R^{n+1}$ to prove the lemma.

\end{proof}


Now, we use this modified stability inequality to obtain an integral estimate for $|A|^2$.

\begin{theorem} \label{th:IntEst}
Let $\Sigma \subset B_{2R}(0) \subset \mathbb R^{n+1}$ with $\partial \Sigma \subset \partial B_{2R}(0)$ satisfy $H = (1/2)\la x, N \ra + C$ and the stability condition \eqref{eq:minimizing}. If  $\Au(\Sigma \cap B_R) \geq 2B_n R^{-2}\Au(\Sigma\cap(B_{2R}\setminus B_R))$, then we have that
\begin{equation}
\int\limits_{B_R\cap\Sigma} |A|^2 \dAu \leq 2B_n R^{-2} \Au(\Sigma \cap(B_{2R} \setminus B_R)). \label{eq:IntEst}
\end{equation}
Here, $B_n$ is the constant from Lemma \ref{lm:modstability}.
\end{theorem}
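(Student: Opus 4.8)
The plan is to run the standard Schoen--Simon--Yau scheme using the modified stability inequality of Lemma~\ref{lm:modstability}, choosing a cutoff adapted to the annulus $B_{2R}\setminus B_R$. First I would pick $\phi \in C^\infty_0(\Sigma)$ equal to $1$ on $B_R \cap \Sigma$, supported in $B_{2R}\cap \Sigma$, with $|\nabla\phi| \leq C/R$ (e.g.\ $\phi = \phi_R$ as in \eqref{eq:816}, which is legitimate since $\partial\Sigma \subset \partial B_{2R}$ so $\phi_R$ vanishes near the boundary). Then $|\nabla\phi|$ is supported in the annulus, so the right side of \eqref{eq:92} is bounded by $B_n R^{-2}\Au(\Sigma\cap(B_{2R}\setminus B_R))$, after absorbing the constant into $B_n$ (or tracking a slightly different constant; I'd just call the final constant $B_n$).

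The key point is to lower-bound the left side of \eqref{eq:92} by something comparable to $\int_{B_R\cap\Sigma}|A|^2\dAu$. The obstacle is that \eqref{eq:92} controls $|N_\phi|^2 \int \phi^2|A|^2\dAu$, not $\int\phi^2|A|^2\dAu$ directly, and a priori $|N_\phi|$ could be small. This is exactly what the first term $\int_\Sigma \phi^2|N-N_\phi|^2\dAu$ is for: I would argue that $|N_\phi|$ cannot be small unless $N$ deviates a lot from $N_\phi$ on a large set, which makes the first term large. Concretely, using $|N|^2 = 1$ pointwise one has the identity $\int_\Sigma \phi^2|N-N_\phi|^2\dAu = \big(\int_\Sigma\phi^2\dAu\big)(1 - |N_\phi|^2)$ (since $N_\phi$ is, up to the normalizing factor, the $\phi^2$-average of $N$ — here one must be a little careful because $N_\phi$ was defined with weight $\phi$, not $\phi^2$; if needed I would redefine or reapply Lemma~\ref{lm:modstability} with $\phi$ replaced by $\phi$ so that the averaging weight matches, or simply carry the elementary inequality $|N - N_\phi|^2 \geq 1 - |N_\phi|^2$ pointwise). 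Thus the left side of \eqref{eq:92} is at least
\begin{equation}
(1-|N_\phi|^2)\,\Au(\Sigma\cap B_R) + |N_\phi|^2 \int_{B_R\cap\Sigma}|A|^2\dAu,
\end{equation}
using $\phi \geq \mathbf{1}_{B_R\cap\Sigma}$ in both terms.

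Now the hypothesis $\Au(\Sigma\cap B_R) \geq 2B_nR^{-2}\Au(\Sigma\cap(B_{2R}\setminus B_R))$ enters: write $E \equiv 2B_nR^{-2}\Au(\Sigma\cap(B_{2R}\setminus B_R))$, so \eqref{eq:92} reads
\begin{equation}
(1-|N_\phi|^2)\,\Au(\Sigma\cap B_R) + |N_\phi|^2\int_{B_R\cap\Sigma}|A|^2\dAu \leq E \leq \Au(\Sigma\cap B_R).
\end{equation}
If $|N_\phi|^2 = 0$ the first term forces $\Au(\Sigma\cap B_R)\leq E$, and then trivially $\int_{B_R\cap\Sigma}|A|^2\dAu = 0 \leq E$ would need a separate (easy) argument — actually in that degenerate case $|N_\phi|=0$ combined with stability and the curvature equation should already force planarity, but more simply I can avoid case analysis: drop the nonnegative first term to get $|N_\phi|^2\int_{B_R\cap\Sigma}|A|^2\dAu \leq E$, and separately from the first term get $(1-|N_\phi|^2) \leq E/\Au(\Sigma\cap B_R) \leq 1/2$, hence $|N_\phi|^2 \geq 1/2$. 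Combining, $\int_{B_R\cap\Sigma}|A|^2\dAu \leq 2E = 2\cdot 2B_nR^{-2}\Au(\Sigma\cap(B_{2R}\setminus B_R))$; reabsorbing the factor $2$ (and the factor from $|\nabla\phi|\leq C/R$ versus $1/R$) into the constant $B_n$ yields \eqref{eq:IntEst}. The only genuinely delicate point is the bookkeeping of constants and making sure the cutoff's gradient is supported in $B_{2R}\setminus B_R$ with the stated bound, which is routine; the conceptual heart is the $|N_\phi|^2 \geq 1/2$ deduction from the averaged-normal term.
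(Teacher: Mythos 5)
Your overall strategy (the cutoff $\phi_R$, Lemma \ref{lm:modstability}, and using the averaged-normal term to force $|N_\phi|$ away from zero) has the right shape, but the key quantitative step is wrong. The pointwise inequality $|N-N_\phi|^2 \geq 1-|N_\phi|^2$ is false: at a point where $N$ is parallel to $N_\phi$ one has $|N-N_\phi|^2=(1-|N_\phi|)^2 < 1-|N_\phi|^2$. The integral identity $\int_\Sigma\phi^2|N-N_\phi|^2\dAu = (1-|N_\phi|^2)\int_\Sigma\phi^2\dAu$ that you want instead requires $N_\phi$ to be the $\phi^2$-weighted average of $N$, whereas Lemma \ref{lm:modstability} forces the $\phi$-weighted average: its test function is $\phi\la v,N-N_\phi\ra$, and $\int_\Sigma\phi\la v,N-N_\phi\ra\dAu=0$ holds only for that normalization, so you cannot ``reapply the lemma with matching weights'' without losing admissibility. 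With the mismatched weights, Cauchy--Schwarz only gives $\int_\Sigma\phi^2|N-N_\phi|^2\dAu\geq(1-|N_\phi|)^2\int_\Sigma\phi^2\dAu$, and a two-region configuration with $\phi$ taking the values $1$ and $1/2$ and $N=\pm e_1$ shows your stronger bound genuinely fails (one gets $8/9$ versus $10/9$). Running your scheme with the correct bound yields only $|N_\phi|\geq 1-1/\sqrt2$, hence $|N_\phi|^2\gtrsim 0.086$, and the final constant degrades to roughly $12B_n$ rather than $2B_n$; since the same $B_n$ appears in the hypothesis $\Au(\Sigma\cap B_R)\geq 2B_nR^{-2}\Au(\Sigma\cap(B_{2R}\setminus B_R))$, this loss cannot simply be reabsorbed. (There is also a smaller slip: your chain gives $(1-|N_\phi|^2)\leq E/\Au(\Sigma\cap B_R)\leq 1$, not $\leq 1/2$, unless you remember that the right side of \eqref{eq:92} is $E/2$.)

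The paper sidesteps the whole issue. After expanding $|N-N_\phi|^2=1-2\la N,N_\phi\ra+|N_\phi|^2$ and bounding $\la N,N_\phi\ra\leq|N_\phi|$, it treats the left side of \eqref{eq:92} as a quadratic in $t=|N_\phi|$, namely $at^2+bt+c$ with $a=\int_\Sigma\phi^2(1+|A|^2)\dAu$, $b=-2\int_\Sigma\phi^2\dAu$, $c=\int_\Sigma\phi^2\dAu$, and uses $at^2+bt+c\geq c-b^2/(4a)$. This produces $\frac{PQ}{P+Q}\leq B_nR^{-2}\Au(\Sigma\cap(B_{2R}\setminus B_R))$ with $P=\int_\Sigma\phi^2\dAu$ and $Q=\int_\Sigma\phi^2|A|^2\dAu$, and the hypothesis $P\geq 2B_nR^{-2}\Au(\Sigma\cap(B_{2R}\setminus B_R))$ then gives $Q\leq 2B_nR^{-2}\Au(\Sigma\cap(B_{2R}\setminus B_R))$ exactly. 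Replace your case analysis on $|N_\phi|$ with this completion-of-the-square step and the proof closes with the stated constant.
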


\begin{proof}
We first construct a cut off function depending only on the Euclidean $|x|$ such that
\begin{equation}
\phi(r) =
\begin{cases}
1 & r \leq R \\
\text{linear} & R \leq r \leq 2R \\
0 & 2R \leq r
\end{cases} \label{eq:98}
\end{equation}
Our modified stability inequality lemma gives us
\begin{equation}
\intS \phi^2 \dAu - 2|N_\phi|\intS \phi^2 \dAu + |N_\phi|^2\intS \phi^2(|A|^2 + 1) \dAu  \leq B_nR^{-2} \Au(\Sigma\cap(B_{2R}\setminus B_R)).\label{eq:99}
\end{equation}
Note that the left hand side of this inequality is quadratic in $|N_\phi|$, and since any quadratic with $a > 0$ satisfies $a u^2 + b u + c \geq c - \frac{b^2}{4a}$, we get that
\begin{equation}
\intS \phi^2 \dAu - \frac{\left(\intS \phi^2\dAu\right)^2}{\intS \phi^2 (|A|^2 + 1) \dAu} \leq B_n R^{-2}\Au(\Sigma\cap(B_{2R}\setminus B_R))\label{eq:910}.
\end{equation}
So, we have
\begin{equation}
\frac{\intS \phi^2 \dAu \intS \phi^2 |A|^2 \dAu}{\int\limits_\Sigma\phi^2 (1+|A|^2) \dAu} \leq B_n R^{-2}\Au(\Sigma\cap(B_{2R}\setminus B_R))\label{eq:911}.
\end{equation}
This inequality is of the form $\frac{ab}{a+b} \leq c$ where $a = \int_\Sigma \phi^2 \dAu$ and $b=\int_\Sigma \phi^2 |A|^2 \dAu$. This can be put into the form $(a - c)b \leq ca$. From our assumption that $\Au(\Sigma \cap B_R) \geq 2B_n R^{-2}\Au(\Sigma\cap(B_{2R}\setminus B_R))$ we get that $a \geq 2c$ and $a-c \geq (1/2)a$. Therefore, we have that $b \leq 2c$, which gives us that
\begin{equation}
\intS \phi^2 |A|^2 \dAu \leq 2B_nR^{-2}\Au(\Sigma\cap(B_{2R}\setminus B_R)).
\end{equation}

So, we get that
\begin{equation}
\int\limits_{B_R(0)} |A|^2 \dAu \leq 2B_n R^{-2}\Au(\Sigma\cap(B_{2R}\setminus B_R)).
\end{equation}

\end{proof}

\begin{remark}

For the case of properly immersed, two-sided, smooth, complete $\Sigma \subset\mathbb R^{n+1}$ satisfying the mean curvature condition \eqref{eq:curvcond}, stability condition \eqref{eq:minimizing}, and $\Au(\Sigma)< \infty$, we have that there exists an $R_0$ large enough such that for $R> R_0$, we have that $\Au(\Sigma \cap B_R) \geq 2B_n R^{-2}\Au(\Sigma\cap(B_{2R}\setminus B_R))$. Sending $R \to \infty$ in \eqref{eq:IntEst}, we get that $\int_\Sigma |A|^2 \dAu = 0$. So therefore, our estimate \eqref{eq:IntEst} also gives that the only such $\Sigma$ are hyper-planes.
\end{remark}

\begin{remark}
When considering $\Sigma$ non-complete, there are conditions that are sufficient to guarantee that $\Au(\Sigma \cap B_R) \geq 2B_n R^{-2}\Au(\Sigma\cap(B_{2R}\setminus B_R))$. Let $H = (1/2)\la x, N \ra + C$ with $|C| \leq M$.

We need a lower bound on $\Au(\Sigma\cap B_R)$. In order to accomplish this, we look at getting some control over $\min |x|$ and the Euclidean mean curvature $H$ around some point realizing $\min |x|$.  Let $\Sigma $ achieve $\min |x|$ at the point $p \in \Sigma$. At $p$, we have that
\begin{equation}
2n - |x|^2 + 2M |x| \geq 2n - |x|^2 - 2C\la x, N \ra = \mathcal L |x|^2 \geq 0.\label{eq:913}
\end{equation}

So, there exists a constant $D(M,n)$ large such that $\min |x| \leq D(M,n)$, and that $|H| \leq 2D(M,n)$ on $B_{2D(M,n)}$. Using Corollary \ref{co:vlower} and renaming $D(M,n)$, we can turn these bounds into a lower bound on the Euclidean area $\mathcal{A}(\Sigma\cap B_{2D(M,n)})\geq D(M,n)^{-1}$. Again renaming $D(M,n)$, we get that $\Au(\Sigma\cap B_{2D(M,N)})\geq e^{-D(M,n)^2} D(M,n)^{-1}$. Upon renaming constants, if $R > 2D(M,n)$, then $R^2 \Au(\Sigma \cap B_R) \geq R^2D(M,n)^{-1}$.

Therefore, there exists $D(M,n)$ such that if $\Au(\Sigma \cap B_{2R}) \leq D(M,n)^{-1} R^2$, then we are guaranteed that $\Au(\Sigma \cap B_R) \geq 2B_n R^{-2}\Au(\Sigma\cap(B_{2R}\setminus B_R))$.

\end{remark}

%
%

\section{A  Pointwise Curvature Estimate} \label{sc:ptest}

To achieve a pointwise curvature estimate from an integral estimate, we will need to make use of two inequalities: a Simons-type inequality and a Mean Value Inequality.  These inequalities have well-known analogues in the theory of minimal surfaces, and we adjust these proofs to fit our needs. The proof of the Mean Value Inequality, Lemma \ref{lm:mean}, is left to the Appendix.

A key element to these proofs are that we are in the case of $n=2$, that is $\Sigma \subset \mathbb R^3$. These proofs do not hold for general $n$.

\begin{lemma} \label{lm:simon}
\textbf{Simons Inequality:} For a  hyper-surface $\Sigma$ satisfying $H = (1/2)\la x, N \ra + C$, we have that
\begin{equation}
\triangle |A|^2 \geq -(|x|^2 / 8)|A|^2 - (2 + C^2)|A|^4 .\label{eq:12}
\end{equation}
\end{lemma}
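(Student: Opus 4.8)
The strategy is to start from the standard Simons identity for hypersurfaces in Euclidean space and then use the special structure coming from the mean curvature condition $H = (1/2)\la x, N \ra + C$. Recall that for a hypersurface in $\mathbb R^{n+1}$ the Simons identity reads
\begin{equation}
\triangle A_{ij} = \nabla_i\nabla_j H + H A_{ik}A_{kj} - |A|^2 A_{ij},
\end{equation}
(using the Codazzi equations and the Gauss equation \eqref{eq:03} to commute derivatives). Taking the inner product with $A_{ij}$ and using $\triangle |A|^2 = 2 A_{ij}\triangle A_{ij} + 2|\nabla A|^2$, I would obtain
\begin{equation}
\triangle |A|^2 = 2 A_{ij}\nabla_i\nabla_j H + 2H A_{ij}A_{ik}A_{kj} - 2|A|^4 + 2|\nabla A|^2.
\end{equation}
The term $2|\nabla A|^2 \geq 0$ is dropped immediately. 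The term $2H\,\Tr(A^3)$ is controlled by $|H|\,|A|^3 \cdot(\text{const})$, and via $|H| = |(1/2)\la x,N\ra + C| \le (1/2)|x| + |C|$ one absorbs it into a multiple of $(|x|^2/8)|A|^2 + (\text{const})|A|^4$ after Cauchy-Schwarz; this is where the $C^2$ dependence enters.

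The main work is the Hessian term $2A_{ij}\nabla_i\nabla_j H$. Here I would differentiate the curvature condition $H = (1/2)\la x, N\ra + C$ twice. A first differentiation gives $\nabla_k H = (1/2)\la \partial_k, N\ra + (1/2)\la x, \nabla_k N\ra = (1/2)\la x^T, A(\partial_k,\cdot)\ra$, i.e. $\nabla_k H = (1/2)A_{k\ell}x^T_\ell$ (since $\la \partial_k, N\ra = 0$). Differentiating again and using Codazzi ($A_{ij,k}$ fully symmetric) together with $\nabla_i x^T_\ell = \delta_{i\ell} - H? $—more precisely $\nabla_i(x^T) = (\partial_i)^T$-part identities giving $\nabla_i x^T_j = \delta_{ij} - A_{ij}\la x,N\ra$—one gets $\nabla_i\nabla_j H = (1/2)A_{ij} + (1/2)A_{ij,\ell}x^T_\ell - (1/2)\la x,N\ra A_{i\ell}A_{\ell j}$. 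Contracting with $A_{ij}$ yields a term $(1/2)|A|^2$, a term $(1/2)A_{ij}A_{ij,\ell}x^T_\ell = (1/4)\nabla_\ell|A|^2 \, x^T_\ell$, and a term $-(1/2)\la x,N\ra\,\Tr(A^3)$. The first is a harmless good term; the $\la x,N\ra\,\Tr(A^3)$ term combines with the earlier cubic term, and all cubic terms get absorbed by Cauchy–Schwarz into $(\text{const})(1+C^2)|A|^4$ plus $(|x|^2/\text{const})|A|^2$.

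The one subtlety I expect to be the main obstacle is bookkeeping the first-order term $(1/4)\la \nabla|A|^2, x^T\ra$: strictly speaking this makes the inequality as stated, $\triangle|A|^2 \geq -(|x|^2/8)|A|^2 - (2+C^2)|A|^4$, an inequality for the \emph{drift} operator $\mathcal L|A|^2 = \triangle|A|^2 - (1/2)\nabla_{x^T}|A|^2$ rather than for the plain Laplacian—or else the authors are folding that drift term into the statement by a different normalization. I would therefore double-check the precise constants, being careful that $|x|$ is bounded by $2R$ on $\Sigma \subset B_{2R}(0)$ so that $|x^T|\le |x| \le 2R$; the key point is only that the coefficient of $|A|^2$ is a universal multiple of $|x|^2$ and the coefficient of $|A|^4$ depends only on $C$ (through $C^2$), which is exactly what the subsequent pointwise estimate in Section \ref{sc:ptest} needs. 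The remaining estimates (combining $|H|^2 \le (1/2)|x|^2 + 2C^2$ and absorbing cross terms) are routine applications of $2ab \le \epsilon a^2 + \epsilon^{-1}b^2$.
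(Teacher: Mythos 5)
Your overall route is the same as the paper's: the Simons identity $\triangle A_{jk} = \nabla^2_{jk}H + HA^2_{jk} - |A|^2A_{jk}$, a double differentiation of $H = (1/2)\la x,N\ra + C$ giving $\nabla^2_{jk}H = (1/2)A_{jk,\ell}x^T_\ell + (1/2)A_{jk} - (1/2)\la x,N\ra A^2_{jk}$, the exact cancellation of the cubic terms $-(1/2)\la x,N\ra A^2_{jk} + HA^2_{jk} = CA^2_{jk}$, and Cauchy--Schwarz at the end. But there is one genuine gap, and it is exactly the point you flag as "the main obstacle" and then leave unresolved: the first-order term $A_{jk}A_{jk,\ell}x^T_\ell = (1/2)\nabla_{x^T}|A|^2$. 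The inequality in the lemma really is for the plain Laplacian, not for $\mathcal L$, and the way to get it is to \emph{not} drop the term $2|\nabla A|^2$ at the outset, as you propose to do. Keeping it, one estimates
\begin{equation}
A_{jk}A_{jk,\ell}x^T_\ell \;\geq\; -|\nabla A|\,\bigl(|A|\,|x|\bigr) \;\geq\; -2|\nabla A|^2 - \tfrac{1}{8}|x|^2|A|^2,
\end{equation}
using precisely the splitting $ab \leq 2a^2 + (1/8)b^2$; the $-2|\nabla A|^2$ is then cancelled by the retained $+2|\nabla A|^2$ from Bochner, and the $-(|x|^2/8)|A|^2$ is the first term of \eqref{eq:12}. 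Having discarded $2|\nabla A|^2$, you have no way to absorb the drift term and cannot close the argument for $\triangle$ itself.

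With that repair the rest of your computation gives the stated constants on the nose rather than unspecified ones: after the cancellation the cubic contribution is $2C\,\Tr(A^3) \geq -2|C|\,|A|^3 \geq -C^2|A|^4 - |A|^2$, and the $-|A|^2$ cancels against the $+|A|^2$ coming from the $(1/2)A_{jk}$ term in $\nabla^2_{jk}H$, leaving exactly $\triangle|A|^2 \geq -(|x|^2/8)|A|^2 - (2+C^2)|A|^4$. Your alternative suggestion of bounding $2H\,\Tr(A^3)$ via $|H| \leq (1/2)|x| + |C|$ before the cancellation is unnecessary and would degrade the constants; the exact cancellation against $-(1/2)\la x,N\ra A^2_{jk}$ is the cleaner and intended step, and you did note it as an option.
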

\begin{proof}
First, note that from Codazzi's equation we have that $\nabla A$ is symmetric. We fix a point $p \in \Sigma$ and look at geodesic normal coordinates centered at $p$. Therefore, at $p$ we have that
\begin{equation}
\triangle A_{jk}  = \nabla^2_{jk}H + HA^2_{jk} - |A|^2 A_{jk} \label{eq:21}.
\end{equation}

Now, from $H = \frac{\la x, N \ra}{2} + C$ we have that
\begin{equation}
\nabla^2_{jk} H = (1/2)\nabla_j A(k, x^T) + (1/2)A_{jk} - (1/2)\la x, N \ra A^2_{jk} \label{eq:23}.
\end{equation}

Applying a Cauchy-Schwarz inequality of the form $ab \leq 2 a^2 + (1/8)b^2$ to $\la A, \nabla_{x^T} A\ra$, we get
\begin{equation}
\triangle |A|^2 \geq -(|x|^2 / 8) |A|^2 - (2+C^2)|A|^4 \label{eq:25}.
\end{equation}
\end{proof}

\begin{lemma} \label{lm:mean}
\textbf{Mean Value Inequality:} Suppose that, on a hypersurface with $\abs{H} \leq M$, a function $f$ satisfies $f\geq 0$ and $\Delta f \geq -\lambda t^{-2}f$ for some $\lambda$ on $B_t (x)$.  Then, for $s\leq t$ we have that
\begin{equation}
e^{\left(\lambda/2t + M\right)s}s^{-n}\int\limits_{B_s (x) \cap \Sigma} f \dA\geq \omega_n f(x).
\label{eq:13}
\end{equation}
\end{lemma}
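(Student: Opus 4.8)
The plan is to mimic the classical monotonicity-formula argument for minimal submanifolds, but carefully tracking the two error terms: the one coming from $\abs{H}\leq M$ (the hypersurface is no longer minimal) and the one coming from the differential inequality $\Delta f \geq -\lambda t^{-2} f$ (the function $f$ is no longer subharmonic). Fix $x\in\Sigma$ and abbreviate $B_s = B_s(x)\cap\Sigma$. For $0<s\leq t$ set
\begin{equation}
I(s) = s^{-n}\int_{B_s} f\dA .
\end{equation}
The first step is to compute $I'(s)$. Using the coarea formula and the first variation of area for a hypersurface with mean curvature $H$ (which contributes a term bounded by $M$ times $\int_{B_s}|x-\cdot| f\dA \leq Ms\int_{B_s} f\dA$), together with integration by parts on $B_s$ applied to $\Delta f$, one obtains an identity of the schematic form
\begin{equation}
I'(s) = s^{-n}\int_{\partial B_s}\Big(\text{gradient/normal terms}\ \geq 0\Big) + (\text{error}),
\end{equation}
where the error is controlled by $\big(M + \lambda s^{-2}\cdot s\big)I(s)$, i.e. by $\big(M + \lambda t^{-1}\big)I(s)$ after using $s\leq t$ in the coefficient $\lambda s^{-2}$ estimated against $\lambda t^{-2}$ — here I would be slightly careful to pull out the worst constant, matching the $\lambda/2t$ that appears in the statement. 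The positive boundary term is discarded.

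The second step is to integrate the resulting differential inequality $I'(s)\geq -(\lambda/2t + M)I(s)$, which is equivalent to saying $e^{(\lambda/2t+M)s}I(s)$ is nondecreasing in $s$ on $(0,t]$. The third step is to let $s\searrow 0$: since $f$ is continuous and $\geq 0$, one has $s^{-n}\int_{B_s}f\dA \to \omega_n f(x)$ (because $\mathcal A(B_s)/(\omega_n s^n)\to 1$ as $s\to 0$ on a smooth hypersurface, and $f$ is nearly constant near $x$). Combining monotonicity with this limit yields
\begin{equation}
e^{(\lambda/2t+M)s}s^{-n}\int_{B_s} f\dA \ \geq\ \lim_{\sigma\searrow 0} e^{(\lambda/2t+M)\sigma}\sigma^{-n}\int_{B_\sigma} f\dA\ =\ \omega_n f(x),
\end{equation}
which is exactly \eqref{eq:13}.

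I expect the main obstacle to be the clean derivation of the differential inequality for $I(s)$ with exactly the claimed constant $\lambda/2t + M$: one must handle the mean-curvature error term in the first variation of area (which requires $\abs{H}\leq M$ and the fact that the weight function $|x-\cdot|$ in the monotonicity identity is bounded by $s$ on $B_s$), and one must convert the factor $\lambda t^{-2}$ in $\Delta f \geq -\lambda t^{-2}f$ into a coefficient of $I(s)$, which brings in a factor of $s$ from integrating the bulk term and the bound $s\leq t$. Getting the constant to come out as $\lambda/2t$ rather than, say, $\lambda/t$ is a matter of bookkeeping in these estimates. Since the excerpt defers this proof to the Appendix, I would present the argument there in full; the sketch above is the skeleton. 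Note also the hypothesis $s\leq t$ is used twice — once to bound $\lambda s^{-2}\leq \lambda t^{-2}$ in the coefficient and once implicitly so that $B_s\subset B_t$, where the differential inequality on $f$ is assumed to hold.
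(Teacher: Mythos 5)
Your plan is correct and follows essentially the same route as the paper's appendix proof: the standard monotonicity computation for $g(s)=s^{-n}\int_{B_s\cap\Sigma}f\dA$ using $\Delta|x|^2=2n-2\la x,N\ra H$ and the coarea formula, yielding $g'(s)\geq -(\lambda/2t+M)g(s)$ for $s\leq t$, followed by integrating the resulting monotonicity of $e^{(\lambda/2t+M)s}g(s)$ and letting $s\searrow 0$. (Minor note: the coefficient in your schematic error term should read $\lambda t^{-2}\cdot s\leq \lambda t^{-1}$ rather than $\lambda s^{-2}\cdot s$, as you in fact state correctly in your closing paragraph, and the factor $1/2$ comes out of the boundary/bulk splitting in the monotonicity identity exactly as you anticipate.)
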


We now use a standard tool due to Choi-Schoen \cite{choischoen} for turning our integral estimates for $|A|$ into pointwise estimates. Note, that in Theorem \ref{th:choischoen}, we need to require that $r_0 < 1/R$. This is to give more control of estimates coming from the Mean Value Inequality \eqref{eq:13} to give us \eqref{eq:211}. Also it is used to control the $|x|^2$ term in our Simons-type inequality \eqref{eq:12}.

\begin{theorem}\label{th:choischoen}
There exists $\epsilon_M > 0$ such that the following holds: Suppose $\Sigma\subset \mathbb R^3$ is any hypersurface satisfiying $H = \la x, N \ra /2 + C$ with $|C| \leq M$, and suppose $x_0 \in \Sigma$.  Also, suppose that for some $R \geq 1$ and some $r_0 < 1/R$, we have $B_{r_0}(x_0) \subset B_R(0)$ and $\partial \Sigma \subset \partial B_R(0)$.  Finally, suppose that
\begin{equation}
\int\limits_{B_{r_0}(x_0)} |A|^2\dA < \delta \epsilon \label{eq:ChoiSchoen}.
\end{equation}
Then for all $0 < \sigma \leq r_0$ and $y\in B_{r_0 - \sigma}(x_0)$, $|A|^2(y) \leq \delta/\sigma^2$.
\end{theorem}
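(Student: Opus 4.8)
\emph{Proof proposal.} The plan is to run the classical Choi--Schoen point--picking argument, feeding the Simons--type inequality of Lemma~\ref{lm:simon} into the mean value inequality of Lemma~\ref{lm:mean}; the hypothesis $r_0 < 1/R$ is used precisely to absorb every $R$--dependent quantity that appears. Argue by contradiction: suppose some $y_0 \in B_{r_0 - \sigma}(x_0)$ satisfies $|A|^2(y_0) > \delta/\sigma^2$. On the compact set $\overline{B_{r_0}(x_0)} \cap \Sigma$ (compact since $\Sigma$ is proper, and disjoint from $\partial\Sigma$ since $\partial\Sigma \subset \partial B_R(0)$ while $B_{r_0}(x_0) \subset B_R(0)$), consider
\[
h(y) = \big(r_0 - |y - x_0|\big)^2 \, |A|^2(y) .
\]
Then $h$ vanishes where $|y-x_0| = r_0$, while $h(y_0) \ge \sigma^2 (\delta/\sigma^2) = \delta$, so $h$ attains its maximum at an interior point $z$ with $h(z) \ge \delta$. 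Set $\rho_0 = r_0 - |z - x_0| \in (0, r_0]$ and $e_0 = |A|^2(z)$, so $\rho_0^2 e_0 \ge \delta > 0$. The maximality of $h$ at $z$ gives the standard bound $|A|^2 \le 4 e_0$ on $B_{\rho_0/2}(z)$.

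Next I would rescale to the ball $B_{r_1}(z)$ with $r_1 = \min\big(e_0^{-1/2},\, \rho_0/2\big)$; note $B_{r_1}(z) \subset B_{r_0 - \rho_0/2}(x_0) \subset B_{r_0}(x_0) \subset B_R(0)$, so this ball avoids $\partial\Sigma$ and has $|x| \le R$ on it. On $B_{r_1}(z)$, the Simons inequality \eqref{eq:12} together with $|x|^2 \le R^2$, $|A|^2 \le 4e_0$, $r_1^2 e_0 \le 1$, $|C| \le M$, and $r_1^2 R^2 \le (r_0/2)^2 R^2 < 1/4$ yields $\triangle |A|^2 \ge -\lambda\, r_1^{-2} |A|^2$ for a constant $\lambda = \lambda(M)$. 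Since $|H| = |\la x,N\ra/2 + C| \le R/2 + M$ on $B_{r_1}(z)$, Lemma~\ref{lm:mean} applies with $s = t = r_1$; because $r_1 \le r_0/2 < 1/(2R)$, the prefactor $\exp\!\big((\lambda/(2r_1) + R/2 + M)\, r_1\big) = \exp\!\big(\lambda/2 + (R/2 + M) r_1\big)$ is bounded by a constant $K_M$ depending only on $M$. Using $n = 2$, Lemma~\ref{lm:mean} then reads
\[
e^{K_M}\, r_1^{-2} \int_{B_{r_1}(z) \cap \Sigma} |A|^2 \dA \ \ge\ \omega_2\, e_0 ,
\]
i.e. $\int_{B_{r_1}(z) \cap \Sigma} |A|^2 \dA \ge \omega_2 e^{-K_M} r_1^2 e_0$. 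If $r_1 = e_0^{-1/2}$ then $r_1^2 e_0 = 1$, while if $r_1 = \rho_0/2$ then $r_1^2 e_0 = \rho_0^2 e_0/4 \ge \delta/4$; in either case, since $B_{r_1}(z) \subset B_{r_0}(x_0)$, the quantity $\int_{B_{r_0}(x_0)} |A|^2 \dA$ is bounded below by $\omega_2 e^{-K_M}\min(1,\delta/4)$. Choosing $\epsilon_M$ small enough depending only on $M$, this contradicts the hypothesis $\int_{B_{r_0}(x_0)} |A|^2 \dA < \delta \epsilon_M$, proving the theorem.

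The main obstacle — and the reason for requiring $r_0 < 1/R$ — is keeping every constant independent of $R$: the term $|x|^2/8 \le R^2/8$ in the Simons inequality and the bound $|H| \le R/2 + M$ entering the mean value inequality both degenerate as $R \to \infty$, and it is only after the point--picking rescales us down to balls of radius $r_1 \le r_0/2 < 1/(2R)$ that the products $r_1^2 R^2$ and $R r_1$ become $O(1)$, so that $\lambda(M)$ and $K_M$ depend on $M$ alone. The remainder is routine bookkeeping: verifying the inclusions among the various balls, checking that $h$ attains an interior maximum (using properness and $h\equiv 0$ on $\{|y-x_0|=r_0\}$), and computing $\lambda(M)$ and $K_M$ explicitly from \eqref{eq:12} and \eqref{eq:13}.
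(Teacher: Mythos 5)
Your proposal is correct and follows essentially the same Choi--Schoen point-picking argument as the paper: maximize $(r_0-d(\cdot,x_0))^2|A|^2$ over $B_{r_0}(x_0)$, feed the Simons inequality \eqref{eq:12} into the mean value inequality \eqref{eq:13} on a small ball about the maximum point, and use $r_0<1/R$ to absorb the $R$-dependent terms so that all constants depend only on $M$. The only difference is cosmetic --- you take the small radius to be $\min(e_0^{-1/2},\rho_0/2)$ where the paper fixes $\sigma$ by $\sigma^2|A|^2(x_1)=\delta/4$ --- and both arguments share the same implicit restriction that $\delta$ be bounded by a universal constant (yours needs $\delta\le 4$ so that $\min(1,\delta/4)=\delta/4$ in the final contradiction; the paper's needs $\delta<1$ at the step ``$\delta/\sigma^2<1/\sigma^2$''), a restriction satisfied in the application to Theorem \ref{th:Pointwise}.
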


\begin{proof}
On $B_{r_0}(x_0)$, define the function
\begin{equation}
F(y) = \left( r_0 - d(y,x_0)\right)^2 \abs{A(y)}^2
\label{eq:14}
\end{equation}

\noindent where $d(y, x_0)$ is the Euclidean distance between the two points.  Observe that $F\geq 0$ in $B_{r_0}$, and $F=0$ on $\partial B_{r_0}$.  Set $x_1$ to be the point where $F$ achieves its maximum.  Observe that if $F(x_1) \leq \delta$ we will be done, since for $y \in B_{r_0 - \sigma}$, $\sigma^2 \abs{A}^2 \leq F(x_1) \leq \delta$.  We will now show that $F(x_1) > \delta$ gives a contradiction for some $\epsilon_M$ small enough and independent of $\Sigma, \delta,$ and $R\geq 1$.

 Suppose that $F(x_1) > \delta$, ie
\begin{equation}
\left(r_0 - d(x_1 , x_0)\right)^2 \abs{A(x_1)}^2 > \delta,
\label{eq:15}
\end{equation}
and fix $\sigma$ so that $\sigma^2 \abs{A}^2 = \delta/4$.  Observe that the following equations hold:
\begin{align}
\sigma &\leq \frac{1}{2}\left(r_0 - d(x_1 , x_0)\right) \leq \frac{1}{2R} < 1 \label{eq:16},\\
\frac{1}{2} &\leq \frac{r_0 - d(y,x_0)}{r_0 - d(x_1 , x_0)} \leq 2, \forall y \in B_{\sigma}(x_1) \label{eq:17}.
\end{align}

Using these, we compute
\begin{align}
(r_0 - d(x_1 , x_0))^2 \sup\limits_{B_\sigma (x_1)} \abs{A}^2 &\leq 4 \sup\limits_{B_\sigma (x_1)} (r_0 - d(\cdot , x_0))^2 \abs{A}^2 ,\notag\\
&= 4 \sup\limits_{B_\sigma (x_1)} F(\cdot) \leq 4F(x_1), \notag\\
&= 4(r_0 - d(x_1 , x_0))^2\abs{A}^2(x_1)\notag.
\end{align}
Therefore,
\begin{equation}
\sup\limits_{B_\sigma (x_1)} \abs{A}^2 \leq 4 \abs{A}^2 (x_1) = \frac{\delta}{\sigma ^2} \label{eq:18} < \frac{1}{\sigma^2}.
\end{equation}

Plugging \eqref{eq:18} into Simons Inequality \eqref{eq:12}  gives us

\begin{equation}
\Delta \abs*{A}^2 \geq - (R^2/8)|A|^2 - (2 + C^2)/\sigma^2 |A|^2,
\end{equation}

\noindent and using \eqref{eq:16} (specifically, that $R \leq 1/\sigma$) yields

\begin{equation}
\Delta \abs*{A}^2 \geq - \sigma^{-2} (3 + M^2) |A|^2.
\end{equation}

\noindent Therefore,

\begin{equation}
\Delta \abs*{A}^2 \geq -\lambda\sigma^{-2}\abs{A}^2  \label{eq:110}
\end{equation}

\noindent on $B_\sigma (x_1)$, where $\lambda = \lambda(M) =  3+M^2$.  Note that $|H| \leq R + M$. Then, by the Mean Value Inequality \eqref{eq:13},

\begin{align}
\abs{A}^2(x_1) & \leq\omega_n^{-1} e^{R\sigma + M\sigma + (3+M^2)/2} \sigma^{-2}\int\limits_{B_\sigma (x_1) \cap \Sigma} \abs{A}^2 \dA,\\
& \leq \omega_n^{-1} e^{M + 2 + M^2/2} \sigma^{-2}  \int\limits_{B_\sigma (x_1) \cap \Sigma} \abs{A}^2\dA \label{eq:211}.
\end{align}
Here for \eqref{eq:211}, we have used that $\sigma R \leq 1$ which is a consequence of our hypothesis that $r_0 < 1/R$.

%
%

\noindent Substituting back in our definition of $\sigma$, we get

\begin{align}
\delta/4 = \sigma^{2}\abs{A}^2(x_1) &\leq \omega_n^{-1} e^{M + 2 + M^2/2} \int\limits_{B_\sigma (x_1) \cap \Sigma} \abs{A}^2\dA,\\
 &\leq \omega_n^{-1} e^{M + 2 + M^2/2} \int\limits_{B_{r_0} (x_1) \cap \Sigma} \abs{A}^2 \dA,\\
 &\leq \omega_n^{-1} e^{M + 2 + M^2/2} \delta \epsilon.
\label{eq:212}
\end{align}

We see that we may choose $\epsilon$ depending only on $M$ such that there is a contradiction. 

\end{proof}


Using Theorem \ref{eq:ChoiSchoen} combined with Theorem \ref{eq:IntEst} we get pointwise estimates for $\Vu$ preserving stable hyper-surfaces.

\begin{theorem}[Pointwise for $n=2$] \label{th:Pointwise}
Let $M > 0$ be given and $R> 1$. Also, let $\Sigma \subset B_{2R}(0) \subset \mathbb R^{3}$ with $\partial \Sigma \subset \partial B_{2R}(0)$ be a hyper-surface with $H = \la x, N \ra/2 + C$ and $|C| \leq M$ that satisfies the stability condition \eqref{eq:minimizing}.  

There exists $\epsilon_M > 0$ such that if $\Au(\Sigma \cap B_R) \geq 2B_n R^{-2}\Au(\Sigma\cap(B_{2R}\setminus B_R))$ and $\Au(\Sigma \cap(B_{2R} \setminus B_R)) < (R^2/2B_n)e^{-(\frac{1}{16}+\gamma) R^2} \epsilon_M $ for some $\gamma > 0$, then

\begin{equation}
\sup\limits_{x\in B_{R/4}(0)} |A|^2 \leq 16 R^2e^{-\gamma R^2}.
\end{equation}
\end{theorem}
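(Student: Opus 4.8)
The plan is to combine the integral estimate from Theorem \ref{th:IntEst} with the Choi-Schoen type pointwise estimate from Theorem \ref{th:choischoen}, choosing the scale $r_0$ carefully so that the hypotheses of the latter are satisfied on balls centered at any $x_0 \in B_{R/4}(0)$. First I would observe that since $\Sigma \subset B_{2R}(0)$ with $\partial\Sigma \subset \partial B_{2R}(0)$ and $H = \la x, N \ra/2 + C$ with $|C| \leq M$, we may apply Theorem \ref{th:IntEst} with $R$ replaced by $R$ (so the relevant inner ball is $B_R(0)$), using the hypothesis $\Au(\Sigma \cap B_R) \geq 2B_n R^{-2}\Au(\Sigma\cap(B_{2R}\setminus B_R))$, to obtain
\begin{equation}
\int\limits_{B_R \cap \Sigma} |A|^2 \dAu \leq 2B_n R^{-2}\Au(\Sigma \cap(B_{2R}\setminus B_R)).
\end{equation}
Next I would convert this weighted integral estimate into an unweighted one on the smaller ball $B_{R/2}(0)$: on $B_{R/2}(0)$ the weight $e^{-|x|^2/4}$ is bounded below by $e^{-R^2/16}$, so
\begin{equation}
\int\limits_{B_{R/2}\cap\Sigma} |A|^2 \dA \leq e^{R^2/16}\int\limits_{B_R\cap\Sigma}|A|^2\dAu \leq 2B_n R^{-2}e^{R^2/16}\Au(\Sigma\cap(B_{2R}\setminus B_R)).
\end{equation}
Plugging in the second hypothesis, $\Au(\Sigma\cap(B_{2R}\setminus B_R)) < (R^2/2B_n)e^{-(\frac{1}{16}+\gamma)R^2}\epsilon_M$, the right-hand side is bounded by $e^{-\gamma R^2}\epsilon_M$, so $\int_{B_{R/2}\cap\Sigma}|A|^2\dA < e^{-\gamma R^2}\epsilon_M$.

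\textbf{Applying Choi-Schoen.} Now fix any $x_0 \in B_{R/4}(0)$ and set $r_0 = 1/(2R) < 1/R$ (note $R > 1$, and also $r_0 \leq R/4$ once $R$ is not too small — more carefully, $B_{r_0}(x_0) \subset B_{R/4 + 1/(2R)}(0) \subset B_{R/2}(0)$ since $R/4 + 1/(2R) \leq R/2$ for $R \geq 1$). Thus $B_{r_0}(x_0) \subset B_{R/2}(0) \subset B_R(0)$ and $\partial\Sigma \subset \partial B_{2R}(0) \subset \partial B_R(0)$ in the sense required; I would set the "$R$" of Theorem \ref{th:choischoen} to be, say, $2R$ (so the requirement $r_0 < 1/R_{\text{CS}}$ becomes $1/(2R) < 1/(2R)$, which is borderline — so instead take $r_0 = 1/(4R)$ to be safe, still with $B_{r_0}(x_0) \subset B_{R/2}(0)$). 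With this choice, $\int_{B_{r_0}(x_0)}|A|^2\dA \leq \int_{B_{R/2}\cap\Sigma}|A|^2\dA < e^{-\gamma R^2}\epsilon_M$. I would then apply Theorem \ref{th:choischoen} with $\delta = e^{-\gamma R^2}$ and $\epsilon = \epsilon_M$ (so $\delta\epsilon = e^{-\gamma R^2}\epsilon_M$ matches the bound), and with $\sigma = r_0/2 = 1/(8R)$, $y = x_0 \in B_{r_0 - \sigma}(x_0) = B_{1/(8R)}(x_0)$, to conclude
\begin{equation}
|A|^2(x_0) \leq \frac{\delta}{\sigma^2} = \frac{e^{-\gamma R^2}}{(1/(8R))^2} = 64 R^2 e^{-\gamma R^2}.
\end{equation}

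\textbf{Cleaning up constants.} The bound just obtained, $64 R^2 e^{-\gamma R^2}$, is a factor of $4$ worse than the stated $16 R^2 e^{-\gamma R^2}$; this is purely a matter of optimizing the choices of $r_0$ and $\sigma$. Taking $r_0 = 1/(2R)$ (legitimate if the auxiliary radius in Theorem \ref{th:choischoen} is set to $R$ rather than $2R$, which is allowed since $B_{r_0}(x_0) \subset B_R(0)$ and we only need $\partial\Sigma \subset \partial B_{R}(0)$ weakened to $\partial\Sigma \cap B_R = \emptyset$, true here) and $\sigma = r_0/2 = 1/(4R)$ gives $|A|^2(x_0) \leq \delta/\sigma^2 = 16 R^2 e^{-\gamma R^2}$, and taking the supremum over $x_0 \in B_{R/4}(0)$ finishes the proof. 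The main obstacle I anticipate is bookkeeping: making sure the ball inclusions $B_{r_0}(x_0) \subset B_{R/2}(0) \subset B_R(0)$ genuinely hold for all $x_0 \in B_{R/4}(0)$ with the chosen $r_0$, verifying $r_0 < 1/R$ (which needs $R > 1$ and is why the hypothesis $R > 1$ appears), and tracking the interplay between the weighted-to-unweighted conversion factor $e^{R^2/16}$ and the $e^{-(\frac{1}{16}+\gamma)R^2}$ in the area hypothesis so that exactly the power $e^{-\gamma R^2}$ survives to play the role of $\delta$. None of these steps is deep, but the constants must line up exactly for the stated inequality.
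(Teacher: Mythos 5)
Your proposal is exactly the argument the paper intends: the paper offers no separate proof of Theorem \ref{th:Pointwise} beyond the remark that it follows by combining the integral estimate of Theorem \ref{th:IntEst} with the Choi--Schoen-type Theorem \ref{th:choischoen}, and your writeup supplies precisely that combination, with the correct bookkeeping of $\delta = e^{-\gamma R^2}$, $\epsilon = \epsilon_M$, and the weight-conversion factor $e^{R^2/16}$ matching the exponent in the area hypothesis. Your reading of the boundary hypothesis of Theorem \ref{th:choischoen} is also fine: its only role is to keep $B_{r_0}(x_0)$ and the interior ball $B_\sigma(x_1)$ away from $\partial\Sigma$, which holds here since $\partial\Sigma\subset\partial B_{2R}(0)$ and all the balls in play sit inside $B_R(0)$.

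The one genuine slip is arithmetic: the inclusion $B_{1/(2R)}(x_0)\subset B_{R/2}(0)$ for $x_0\in B_{R/4}(0)$ requires $R/4 + 1/(2R)\leq R/2$, which is equivalent to $R\geq\sqrt{2}$, not $R\geq 1$ as you assert; for $1<R<\sqrt{2}$ the ball can leave $B_{R/2}(0)$ and your weight bound $e^{-|x|^2/4}\geq e^{-R^2/16}$ is no longer justified on all of $B_{r_0}(x_0)$. The fix is immediate: take $r_0=\min\{1/(2R),\,R/4\}$ and keep $\sigma=1/(4R)$. Then $B_{r_0}(x_0)\subset B_{R/2}(0)$ for every $R>1$, one still has $\sigma<r_0$ (since $1/(4R)<R/4$ exactly when $R>1$) and $r_0<1/R$, so Theorem \ref{th:choischoen} applies with the same $\delta$ and $\epsilon$ and yields $|A|^2(x_0)\leq\delta/\sigma^2=16R^2e^{-\gamma R^2}$ as claimed. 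With that adjustment the proof is complete.
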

\begin{remark}
Note that in the case that $\mathcal{A}(\Sigma) \leq D R^n$, one does indeed get that for large enough $R$ that $\Au(\Sigma \cap(B_{2R} \setminus B_R)) < (R^2/2B_n)e^{-(\frac{1}{16}+\gamma) R^2} \epsilon_M $ for some $\gamma > 0$.
\end{remark}

%
%
\appendix

%
%

\section{Mean Value Inequality} \label{sc:mean}

Here we give a proof of the Mean Value Inequality, Lemma \ref{lm:mean}. The techniques are well-known (see Colding-Minicozzi \cite{cmbook}), but we include a proof for completeness.

\begin{proof}[Proof of Lemma \ref{lm:mean}]
Assume $\abs*{H} \leq M$. Lemma \ref{lm:mean} is stated in terms of Euclidean quantities, so we are free to translate so that we are considering $B_s(0)$. Recall that $\Delta \abs*{x}^2 = 2n - 2\la x,N \ra H$. Then
\begin{align}
2n\int\limits_{B_s \cap \Sigma} f \dA &= \int\limits_{B_s \cap \Sigma} f\Delta \abs*{x}^2 \dA+ 2\int\limits_{B_s \cap \Sigma}f\la x, N\ra H \dA, \label{eq:31}\\
&= \int\limits_{B_s \cap \Sigma}\abs*{x}^2 \Delta f \dA + 2\int\limits_{\partial B_s \cap \Sigma} f \abs*{x^T}\dA - s^2 \int\limits_{B_s \cap \Sigma} \Delta f \dA+ 2 \int\limits_{B_s \cap \Sigma} \la x,N \ra Hf \dA\notag.
\end{align}

\noindent Let $g(s) = s^{-n}\int\limits_{B_s \cap \Sigma} f\dA$. Using the coarea formula and \eqref{eq:31}, we get

\begin{align}
g'(s) \geq \frac{1}{2}s^{-n+1}\int\limits_{B_s \cap \Sigma} \Delta f \dA- s^{-n-1}\int\limits_{B_s \cap \Sigma}\la x, N\ra f H \dA
\label{eq:32}.
\end{align}
Here we have used the positivity of $f$.  Additionally, if we assume $\Delta f \geq -\lambda t^{-2} f$ on $B_t$, our bound on $\abs{H}$ gives us

\begin{align}
g'(s) &\geq \frac{-\lambda}{2} s^{1-n}\int\limits_{B_s \cap \Sigma}f t^{-2}\dA - Ms^{-1-n}\int\limits_{B_s \cap \Sigma}s f \dA,\notag\\
&\geq -\left(\frac{\lambda}{2t} + M\right)g(s)
\label{eq:33}
\end{align}

\noindent for all $s\leq t$.  Therefore,

\begin{equation}
\frac{d}{ds}\left( g(s)e^{\left(\frac{\lambda}{2t} + M\right)s}\right) \geq 0. \label{eq:34}
\end{equation}

Integrating \eqref{eq:34} from $s_0$ to $s_1$ (both assumed to be less than $t$) and letting $s_0 \searrow 0$, we get

\begin{equation}
e^{\left(\frac{\lambda}{2t} + M\right) s_1} s_1^{-n}\int\limits_{B_{s_1}\cap \Sigma} f \dA\geq \omega_nf(p). \label{eq:35}
\end{equation}

\end{proof}

Note, that we get the following corollary (monotonicity):

\begin{corollary} \label{co:vlower}
Let $p \in \Sigma$, and let $\abs{H} \leq M$ in $B_{t}(p) \cap \Sigma$.  Then for $s\leq t$, we have $\abs*{B_s \cap \Sigma} \geq \omega_n e^{-Ms}s^n$, where $\omega_n$ is the volume of the standard unit ball in $\mathbb{R}^n$.
\end{corollary}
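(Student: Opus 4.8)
The plan is to obtain the corollary as the special case $f\equiv 1$ of the Mean Value Inequality, Lemma \ref{lm:mean}, which has just been proved. On $B_t(p)\cap\Sigma$ the constant function $f\equiv 1$ satisfies $f\geq 0$ and $\Delta f=0$, so the differential hypothesis $\Delta f\geq -\lambda t^{-2}f$ of Lemma \ref{lm:mean} holds with $\lambda=0$. Since $\abs{H}\leq M$ on $B_t(p)\cap\Sigma$ by assumption, and the proof of Lemma \ref{lm:mean} uses the curvature bound only inside $B_t$, the lemma applies and gives, for every $s\leq t$,
\begin{equation*}
e^{Ms}\,s^{-n}\int\limits_{B_s(p)\cap\Sigma} 1\,\dA \;\geq\; \omega_n\, f(p) \;=\; \omega_n .
\end{equation*}
Rearranging yields $\abs{B_s\cap\Sigma}=\int_{B_s(p)\cap\Sigma}\dA\geq \omega_n e^{-Ms}s^n$, which is exactly the asserted bound.

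Equivalently, I would read the estimate straight off the monotonicity inequality \eqref{eq:34} in the proof of Lemma \ref{lm:mean}: with $f\equiv 1$ and hence $\lambda=0$, \eqref{eq:34} says that $s\mapsto e^{Ms}s^{-n}\abs{B_s\cap\Sigma}$ is nondecreasing on $(0,t]$. Because $\Sigma$ is a smooth immersed hypersurface, its density at $p$ is $1$, so $s^{-n}\abs{B_s\cap\Sigma}\to\omega_n$ as $s\searrow 0$; monotonicity then forces $e^{Ms}s^{-n}\abs{B_s\cap\Sigma}\geq\omega_n$ for all $s\leq t$, giving the claim. This is the precise sense in which the corollary records the ``monotonicity'' content of the Mean Value Inequality.

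There is no real obstacle here: the statement is an immediate specialization of a lemma established immediately above, obtained by setting $f\equiv 1$ and $\lambda=0$. The only points worth checking are that the conclusion of Lemma \ref{lm:mean} remains valid in the degenerate case $\lambda=0$ (it does — the argument never divides by $\lambda$, and the exponential prefactor $e^{(\lambda/2t+M)s}$ simply becomes $e^{Ms}$), and that the normalization of $\omega_n$ as the $n$-volume of the unit ball in $\mathbb R^n$ used here agrees with the one in Lemma \ref{lm:mean}; it does, since in both cases $\omega_n$ arises as the $s\searrow 0$ density limit $s^{-n}\abs{B_s\cap\Sigma}\to\omega_n$.
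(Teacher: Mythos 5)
Your proof is correct and matches the paper's argument exactly: the paper also obtains the corollary by applying Lemma \ref{lm:mean} with $f\equiv 1$ and $\lambda=0$. The extra remarks you add about reading it as monotonicity of $e^{Ms}s^{-n}\abs{B_s\cap\Sigma}$ with the density limit at $p$ are a nice gloss but not a different method.
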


\begin{proof}
Use the Mean Value Inequality, Lemma \ref{lm:mean} with $f\equiv 1$ and $\lambda = 0$.

\end{proof}


\bibliographystyle{plain}
\bibliography{VolumePreservingRef}

\end{document}